\title{Joint distribution of inverses in matrix groups over finite fields}
\author{Corentin Perret-Gentil}
\address{Centre de Recherches Mathématiques, Université de Montréal, Canada}
\curraddr{Zürich, Switzerland}
\email{corentin.perretgentil@gmail.com}
\newcommand{\meas}{\operatorname{meas}}
\date{October 2019}
\subjclass[2010]{11C20, 11K36, 11T24, 11L05}
\begin{document}

\begin{abstract}
  We study the joint distribution of the solutions to the equation $gh=x$ in $G(\F_p)$ as $p\to\infty$, for any fixed $x\in G(\Z)$, where $G=\GL_n$, $\SL_n$, $\Sp_{2n}$ or $\SO_{n}^\pm$. In the special linear case, this answers in particular a question raised by S. Hu and Y. Li, and improves their error terms. Similar results are derived in certain subgroups, and when the entries of $g,h$ lie in fixed intervals. The latter shows for example the existence of $g\in\GL_n(\F_p)$ such that $g,g^{-1}$ have all entries in $[0, c_np^{1-1/(2n^2+2)+\varepsilon}]$ for some absolute constant $c_n>0$. The key for these results is to use Deligne's extension of the Weil conjectures on a sheaf on $G$, along with the stratification theorem of Fouvry, Katz and Laumon, instead of reducing to bounds on classical Kloosterman sums.
\end{abstract}

\maketitle

\section{Introduction}

Throughout, we let $n\ge 1$ be an integer, unless specified otherwise.

\subsection{The cases of $(\Z/n)^\times$ and $\GL_n(\F_p)$}

Following several similar results for the group $(\Z/n)^\times$ (see \cite{Shpar12} for a survey), Su Hu and Yan Li \cite{HuLi13} have shown that for the matrix group $G=\GL_n(\F_p)$ and any fixed $x\in G$, the solutions to the equation
\[gh=x \hspace{0.5cm} (g,h\in G)\]
are uniformly distributed in $[0,1]^{n^2}\times[0,1]^{n^2}$ as $p\to\infty$, with respect to the embedding
\begin{eqnarray}
  &&\eta: M_n(\F_p)\to [0,1]^{n^2}\label{eq:eta}\\
  &&g=(g_{i,j})_{i,j}\mapsto \left(\{g_{i,j}/p\}\right)_{i,j,}\nonumber
\end{eqnarray}
where $\{\cdot\}$ denotes the fractional part. In particular, the entries of a nonsingular matrix and its inverse are jointly uniformly distributed. More precisely, they obtain a bound for the discrepancy.

Their main tools are bounds for matrix analogues of Kloosterman sums obtained in \cite{FHLOS10} by reducing to classical Kloosterman sums.

\subsection{Special linear groups}\label{subsec:SL}

At the end of their paper, Hu and Li note that this does \textit{not} hold for $G=\SL_2(\F_p)$, but conjecture that there should be joint uniform distribution whenever $n\ge 3$. We positively answer this by showing:
\begin{theorem}\label{thm:unifdisppGLSL}
  Let $G$ be
  \begin{equation}
    \label{eq:GLSL}
    \GL_n \quad (\text{for }n\ge 2)\qquad \text{or}\qquad\SL_n \quad (\text{for }n\ge 3),
  \end{equation}
  and let $x\in G(\Z)$. As $p\to\infty$, the elements
  \[A_x(g)=\left(g, \ g^{-1}x\right)\in M_n(\F_p)\times M_n(\F_p) \hspace{0.5cm} (g\in G(\F_p))\]
  are uniformly distributed in $\Omega=[0,1]^{n^2}\times[0,1]^{n^2}$ with respect to the embedding $\eta$ in \eqref{eq:eta}. More precisely, for every product of intervals $R$ in $\Omega$,
  \[\frac{|\{g\in G(\F_p) : \eta(A_x(g))\in R\}|}{|G(\F_p)|}=\meas(R)+
    \begin{cases}
      O_{n} \left(\frac{(\log{p})^{n^2+1}}{\sqrt{p}}\right) &: G=\GL_n\\
      O_{n} \left(\frac{(\log{p})^{n^2+2}}{\sqrt{p}}\right) &: G=\SL_n
    \end{cases}
\]
as $p\to\infty$, where $\meas$ denotes the Lebesgue measure. The implied constants depend only on $n$. This also holds with $R\subset \Omega$ an arbitrary convex set if the errors are replaced by their $1/(2n^2)$th powers.
\end{theorem}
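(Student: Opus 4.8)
The plan is to convert the counting problem into an estimate for Weyl sums over $G(\F_p)$, to recognize each such sum as the sum over $G(\F_p)$ of the trace function of an Artin--Schreier sheaf on the group variety $G$, and to control these uniformly by combining Deligne's form of the Riemann Hypothesis over finite fields with the stratification theorem of Fouvry, Katz and Laumon; this last is what makes the bounds uniform in the frequencies and in $x$, in place of the reduction to classical Kloosterman sums used in \cite{HuLi13,FHLOS10}. Concretely, I would first apply the Erd\H{o}s--Tur\'an--Koksma inequality in dimension $2n^2$ (equivalently, sandwich the indicator of $R$ between Beurling--Selberg trigonometric majorants and minorants of degree $H$ in each coordinate), which gives, for a fixed nontrivial additive character $\psi$ of $\F_p$ and with $m=(a,b)$, $r(m)=\prod_k\max(1,|m_k|)$,
\[
\left|\frac{|\{g\in G(\F_p):\eta(A_x(g))\in R\}|}{|G(\F_p)|}-\meas(R)\right|\ \ll_n\ \frac1H+\sum_{0<\|m\|_\infty\le H}\frac{1}{r(m)}\,\frac{|W_\psi(a,b)|}{|G(\F_p)|},\qquad W_\psi(a,b)=\sum_{g\in G(\F_p)}\psi\bigl(\operatorname{tr}(a^{\mathrm t}g)+\operatorname{tr}(b^{\mathrm t}g^{-1}x)\bigr).
\]
For $G=\SL_n$ I would first detect $\det g=1$ by averaging over the characters of $\F_p^\times$ (or by one more completion), costing an extra factor $\log p$ and reducing to sums of this shape over $\GL_n(\F_p)$, twisted by a Kummer sheaf. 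I would also record the identity $W_\psi(a,b)=W_\psi\bigl(a u^{\mathrm t},(u^{\mathrm t})^{-1}b\bigr)$ for every $u\in G(\F_p)$ (substitute $g\mapsto gu$): $W_\psi$ is constant along the orbits of a $G$-action on the pairs $(a,b)$, and exploiting this in the final bookkeeping is what cuts the number of effective frequencies from $\approx 2n^2$ down to $\approx n^2$.

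Next, since inversion is a morphism on $\GL_n$ and on $\SL_n$, the function $f_{a,b}(g):=\operatorname{tr}(a^{\mathrm t}g)+\operatorname{tr}(b^{\mathrm t}g^{-1}x)$ is regular on the smooth affine $\F_p$-variety $G$, and $W_\psi(a,b)$ is the sum over $G(\F_p)$ of the trace function of $\mathcal L_\psi(f_{a,b})$. By Grothendieck--Lefschetz and Deligne's Weil~II,
\[
W_\psi(a,b)=\sum_i(-1)^i\operatorname{tr}\bigl(\operatorname{Frob}_p\mid H^i_c(G_{\overline{\F_p}},\mathcal L_\psi(f_{a,b}))\bigr),
\]
with $\operatorname{Frob}_p$ acting on $H^i_c$ with eigenvalues of modulus $\le p^{i/2}$, and $H^{2\dim G}_c$ vanishing as soon as $f_{a,b}$ is nonconstant on the connected variety $G$ --- which holds for every $(a,b)\ne(0,0)$ once $p\nmid\det x$, automatic for large $p$ since $\det x=\pm1$ for $x\in G(\Z)$. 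Hence $|W_\psi(a,b)|\le\bigl(\sum_i\dim H^i_c(G_{\overline{\F_p}},\mathcal L_\psi(f_{a,b}))\bigr)\,p^{\dim G-1/2}$ for all $(a,b)\ne(0,0)$.

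The crux is then to make this uniform and, where possible, sharper. I would apply the Fouvry--Katz--Laumon stratification theorem to the universal family, i.e.\ to the Artin--Schreier sheaf on $G\times\mathbb{A}^{2n^2}$ attached to $(g;a,b)\mapsto f_{a,b}(g)$ pushed forward to $\mathbb{A}^{2n^2}$: this produces a partition $\mathbb{A}^{2n^2}=\bigsqcup_\alpha X_\alpha$ into locally closed subvarieties on each of which $\sum_i\dim H^i_c(G_{\overline{\F_p}},\mathcal L_\psi(f_{a,b}))$ is bounded by a constant depending only on $n$, uniformly in $p$; in particular $|W_\psi(a,b)|\ll_n p^{\dim G-1/2}$ for every $(a,b)\ne0$. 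Moreover one expects that on the dense open stratum the higher cohomology $H^i_c$ $(i>\dim G)$ vanishes --- for generic $(a,b)$ the function $f_{a,b}$ should be wildly ramified along the divisor at infinity (and, for $\GL_n$, along $\{\det g=0\}$), so only middle cohomology survives --- giving there the much stronger $|W_\psi(a,b)|\ll_n p^{(\dim G)/2}$, negligible against $|G(\F_p)|\asymp p^{\dim G}$ since $\dim G\ge4$ throughout. This generic wildness is precisely what fails for $\SL_2$, where $g\mapsto g^{-1}$ is affine-linear: then $A_x$ maps $G(\F_p)$ into a proper affine subvariety of $\mathbb{A}^{8}$ and cannot equidistribute in $\Omega$, which is the reason for the hypothesis $n\ge3$ in the special linear case (for $n\ge3$ inversion has degree $n-1\ge2$).

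Finally I would assemble everything: feeding the two regimes into the Erd\H{o}s--Tur\'an estimate, the generic frequencies contribute a negligible amount, while the frequencies lying in the lower strata $X_\alpha$ --- each cut out by finitely many polynomial equations, hence confined to subvarieties of controlled dimension, and further thinned by the $G$-orbit symmetry of $W_\psi$ --- give the main term; summing $|W_\psi(a,b)|/r(m)$ over these integer vectors and choosing $H\asymp\sqrt p$ to balance $1/H$ yields the bounds $(\log p)^{n^2+1}/\sqrt p$ for $\GL_n$ and $(\log p)^{n^2+2}/\sqrt p$ for $\SL_n$, the extra logarithm being the one spent on $\det g=1$. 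The case of an arbitrary convex $R\subset\Omega$ then follows from the box case by the standard comparison between box and isotropic (convex) discrepancy in $\mathbb{R}^{2n^2}$, which replaces the error by its $1/(2n^2)$-th power. I expect the main obstacle to be the geometric content of the third step: non-vanishing of $f_{a,b}$ for $(a,b)\ne0$ is immediate, but establishing a Betti bound uniform in $p$, in $(a,b)$ and in $x$, pinning down the generic vanishing of higher compactly supported cohomology, and describing the exceptional strata precisely enough to carry out the final lattice-point count all require a genuine study of the singularities and ramification of the family $\{f_{a,b}\}$ on $G$ --- which is exactly where Deligne's theory and the Fouvry--Katz--Laumon stratification do the work that a reduction to classical Kloosterman sums cannot do uniformly.
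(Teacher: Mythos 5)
Your overall architecture (Erd\H{o}s--Tur\'an--Koksma, then Weyl sums read as trace functions of Artin--Schreier sheaves on $G$, controlled by Deligne's Weil II plus the Fouvry--Katz--Laumon stratification) is exactly the paper's, but three of your load-bearing steps are either wrong or missing. First, the nonconstancy of $f_{a,b}(g)=a\cdot g+b\cdot(g^{-1}x)$ on $G$ for all $(a,b)\neq(0,0)$ is not ``immediate'': it is precisely the content of Proposition \ref{prop:constantGLSL}, proved by testing against elementary matrices, diagonal and antidiagonal matrices (for $\GL_2$), products $I-e_{i,j}-e_{j,k}$ and companion-type matrices (for $n\ge 3$), and conjugation by $x$. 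This is the algebraic heart of the answer to Hu--Li's question --- it is exactly the statement that fails for $\SL_2$, $\Sp_{2n}$ and $\SO_n$ --- so asserting it without proof leaves even the qualitative equidistribution unestablished. Your remark that inversion is affine-linear on $\SL_2$ explains the failure there but does not prove nonconstancy for $n\ge 3$.

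Second, your treatment of $\SL_n$ by detecting $\det g=1$ with multiplicative characters of $\F_p^\times$ does not cost ``an extra factor $\log p$'': it costs a sum over all $p-1$ characters, and after normalizing by $|\SL_n(\F_p)|=|\GL_n(\F_p)|/(p-1)$ the resulting bound is $\gg(p-1)\cdot p^{-1/2}$, i.e.\ trivial. The paper instead applies Proposition \ref{prop:expSumG} and Proposition \ref{prop:stratification} to $G=\SL_n$ directly as a group variety; the extra logarithm for $\SL_n$ comes from the exponent $n^2-\dim G+1$ in the stratification bound, since $\dim\SL_n=n^2-1$ versus $\dim\GL_n=n^2$. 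Third, your mechanism for cutting the logarithm exponent from $2n^2$ down to $n^2+1$ --- the $G$-orbit invariance $W_\psi(a,b)=W_\psi(au^{\mathrm t},(u^{\mathrm t})^{-1}b)$ --- is not viable: the orbits are not compatible with the weights $1/r(m)$ and do not thin the set of integer frequencies in $[-T,T]^{2n^2}$. What actually works (Section \ref{sec:proofs}) is to fix $h_2$, sum $1/r(h_2)$ trivially to get $(\log T)^{n^2}$, and then apply the stratification to the weighted sum over $h_1$ alone, using that integer points $h_1$ reducing into a codimension-$j$ stratum $X_j$ contribute only $(\log T)^{n^2-j}$ to $\sum 1/r(h_1)$, which balances the loss $p^{j/2}$ in the Fouvry--Katz bound; with $T\asymp\sqrt p$ this yields $(\log p)^{2n^2-\dim G+1}/\sqrt p$, i.e.\ the stated exponents.
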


\begin{remark}
  This improves the error terms of \cite{HuLi13}, which are for example $p^{-1/(2(2n^2+1))}$ when $G=\GL_n$. The bulk of the improvement comes from bounding nontrivially the $1/r(\bs h)$ factors appearing in the Erd\H{o}s--Tur\'an--Koksma, which had been overlooked, as suggested by an anonymous referee.
\end{remark}

\begin{notation}
We recall that for two complex-valued functions $f,g$, we write $f=O_n(g)$ or $f\ll_n g$ if there exists a constant $C_n>0$, depending only on the variable $n$, such that $|f|\le C_n g$.  
\end{notation}

\subsubsection{Generalization to certain subgroups}
The following variant shows that equidistribution of $A_x(g)$ still holds in certain subgroups of $\GL_n$.

\begin{theorem}\label{thm:GLdet}
  Let us consider the setting of Theorem \ref{thm:unifdisppGLSL} for $G=\GL_n$. For $f\in \F_p[G]^\times$ a nonvanishing nonconstant function and $U\le\F_p^\times$ a subgroup, let
  \[H=f^{-1}(U)=\{g\in G(\F_p) : f(g)\in U\}.\]
  For every product of intervals $R\subset\Omega$, we have
  \[\frac{|\{g\in H : \eta(A_x(g))\in R\}|}{|H|}=\meas(R)+O_{n} \left(\frac{\sqrt{p}}{|U|}\left(\log \frac{|U|}{\sqrt{p}}\right)^{n^2+1}\right)\]
  as $p\to\infty$. The set $R$ can be replaced by an arbitrary convex set if the error term is replaced by its $1/(2n^2)$th power.
\end{theorem}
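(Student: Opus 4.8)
The plan is to run the argument behind Theorem~\ref{thm:unifdisppGLSL} with a multiplicative-character expansion inserted to detect membership in $H$. By the Erdős--Turán--Koksma inequality applied to the $|H|$ points $\eta(A_x(g))\in[0,1]^{2n^2}$, $g\in H$, it suffices to bound, for $\mathbf a,\mathbf b\in\Z^{n^2}$ with $0<\|(\mathbf a,\mathbf b)\|_\infty\le K$ (some $K<p$ to be chosen) and entries in $(-p/2,p/2]$, the Weyl sums $W(\mathbf a,\mathbf b)=\sum_{g\in H}e_p(\langle\mathbf a,g\rangle+\langle\mathbf b,g^{-1}x\rangle)$, and then to sum the resulting bounds against the weights $1/r(\mathbf h)$. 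Since $U$ is a subgroup of $\F_p^\times$, $\mathbf 1_H(g)=\mathbf 1_{f(g)\in U}=\tfrac{|U|}{p-1}\sum_{\chi|_U=1}\chi(f(g))$, a sum over the $(p-1)/|U|$ characters of $\F_p^\times$ trivial on $U$, whence
\[
W(\mathbf a,\mathbf b)=\frac{|U|}{p-1}\sum_{\chi|_U=1}S_\chi(\mathbf a,\mathbf b),\qquad S_\chi(\mathbf a,\mathbf b)=\sum_{g\in G(\F_p)}\chi(f(g))\,e_p(\langle\mathbf a,g\rangle+\langle\mathbf b,g^{-1}x\rangle).
\]
For $(\mathbf a,\mathbf b)=(0,0)$ this gives $|H|=\tfrac{|U|}{p-1}\sum_{\chi|_U=1}S_\chi(0,0)$; since $\F_p[\GL_n]^\times=\{c\det^m: c\in\F_p^\times,\ m\in\Z\}$ one has $\chi\circ f=\chi(c)\,(\chi^m\circ\det)$, hence $S_\chi(0,0)=\chi(c)\,|\SL_n(\F_p)|\sum_{t\in\F_p^\times}\chi^m(t)$, which vanishes unless $\chi^m=\chi_0$. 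Only $O_f(1)$ characters qualify, and one reads off $|H|\asymp_n|U|\,|G(\F_p)|/p$ whenever $H\neq\emptyset$ (assumed, so the statement is not vacuous).

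The heart of the matter is the bound $|S_\chi(\mathbf a,\mathbf b)|\ll_n p^{n^2-1/2}$, uniformly in $\chi$ and in $(\mathbf a,\mathbf b)\neq(0,0)$, together with the sharper stratified refinement underlying Theorem~\ref{thm:unifdisppGLSL} (full square-root cancellation $O_n(p^{n^2/2})$ outside an exceptional stratum of controlled dimension in the $(\mathbf a,\mathbf b)$-parameter space). Indeed $S_\chi(\mathbf a,\mathbf b)$ is, up to sign, a sum of Frobenius eigenvalues on the cohomology of the lisse rank-one sheaf $\mathcal{F}=\mathcal{L}_{\chi\circ f}\otimes\mathcal{L}_\psi(\langle\mathbf a,\cdot\rangle+\langle\mathbf b,(\cdot)^{-1}x\rangle)$ on $G$, and Deligne's estimates combined with the Fouvry--Katz--Laumon stratification theorem yield the bound provided $\mathcal F$ has no geometrically trivial constituent. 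For $(\mathbf a,\mathbf b)\neq 0$ the Artin--Schreier factor alone is geometrically nontrivial --- wildly ramified along $\{\det=0\}$ when $\mathbf b\neq 0$, and still nontrivial (though unramified there) when $\mathbf b=0$, in either case not a Kummer sheaf geometrically --- so tensoring with the tame rank-one $\mathcal{L}_{\chi\circ f}$ preserves nontriviality; the only geometrically trivial sheaves in the family are the $\mathcal{L}_{\chi\circ f}$ of the $O_f(1)$ main-term characters. The crucial point is that, $f$ being \emph{fixed} and $\mathcal{L}_{\chi\circ f}$ being a constant twist of a $\det$-pullback of a Kummer sheaf, it adds only a bounded amount to all conductors and Betti numbers uniformly in $\chi$; consequently the stratification of the $(\mathbf a,\mathbf b)$-space built for Theorem~\ref{thm:unifdisppGLSL} goes through here with $\chi$ merely a spectator, and summing over the $(p-1)/|U|$ characters gives $|W(\mathbf a,\mathbf b)|\ll_n p^{n^2-1/2}$ for all $\mathbf h\neq 0$, with the improved $O_n(p^{n^2/2})$ off the exceptional stratum. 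Since $|H|\asymp_n|U|\,p^{n^2-1}$, this is $|W(\mathbf a,\mathbf b)|/|H|\ll_n\sqrt p/|U|$, improved to $p^{1-n^2/2}/|U|$ away from the exceptional stratum.

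Inserting these bounds into Erdős--Turán--Koksma and summing against the weights $1/r(\mathbf h)$ --- exactly as in Theorem~\ref{thm:unifdisppGLSL}, the square-root-cancelling contribution is negligible for $n\ge 2$, while the exceptional-stratum contribution, its dimension being small enough, costs only $(\log K)^{n^2+1}$ --- yields a discrepancy bound $O_n(1/K)+O_n\big(\tfrac{\sqrt p}{|U|}(\log K)^{n^2+1}\big)$. Choosing $K\asymp|U|/\sqrt p$ balances the two terms and produces the stated error $\tfrac{\sqrt p}{|U|}\big(\log\tfrac{|U|}{\sqrt p}\big)^{n^2+1}$; the convex-set version follows by sandwiching a convex $R\subset\Omega$ between inner and outer unions of boxes, at the usual cost of a $1/(2n^2)$-th power in dimension $2n^2$. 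The main obstacle is the middle step: verifying both that the Kummer twist creates no geometrically trivial constituent beyond the explicit main-term cases, and --- above all --- that the conductor and Betti-number bounds feeding the Fouvry--Katz--Laumon stratification stay uniform once $\chi$ is introduced, so that the stratification (hence the exponent $n^2+1$) is inherited unchanged from Theorem~\ref{thm:unifdisppGLSL}; this uniformity is precisely what makes the subgroup case come almost for free.
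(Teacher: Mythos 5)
Your proposal is correct and follows essentially the same route as the paper: detect $H$ by expanding over the multiplicative characters of $\F_p^\times$ trivial on $U$, bound each twisted complete sum by Deligne's theorem (with the tame/wild dichotomy ruling out geometric triviality of the Artin--Schreier--Kummer tensor product for nonzero frequencies), import the Fouvry--Katz stratification uniformly in $\chi$ to win the logarithm exponent $n^2+1$, and balance with $T\asymp|U|/\sqrt p$ in Erd\H{o}s--Tur\'an--Koksma. The uniformity in $\chi$ that you flag as the main obstacle is exactly what the paper secures via Katz's Betti-number bound, which depends only on $n$, $\deg f$ and $\deg f_1$ and not on the character itself.
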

\begin{example}
  One may take $H=\{g\in \GL_n(\F_p) : \det(g)\in \F_p^{\times r}\}$ with $r=o(\sqrt{p})$, where $\F_p^{\times r}$ denotes the set of $r$-powers in $\F_p^\times$.
\end{example}
\begin{remarks}\label{ref:nonvanishingf}
  \begin{enumerate}
  \item It is a theorem of Rosenlicht (see e.g. \cite{Brou83}) that if $G$ is a connected affine algebraic group, then $f/f(1)\in\overline\F_p[G]^\times$ must be a one-dimensional character (i.e. a character of the abelianization); in particular, the set $H$ in Theorem \ref{thm:GLdet} is a \emph{normal subgroup}.
  \item In particular, we cannot get a nontrivial version of Theorem \ref{thm:GLdet} for $\SL_n(\F_p)$: since the latter is perfect for $p>3$, $f$ must be constant. The classification of maximal subgroups of $\SL_n(\F_p)$ \cite{Asch84} also shows that the restriction on the index is too stringent.
  \item Using the same techniques, it should be possible to obtain Theorem \ref{thm:GLdet} also when $H\le \GL_n(\F_p)$ is any normal subgroup of index $<\sqrt{p}$. However, this requires additional technicalities that we do not wish to pursue here (see Remark \ref{rem:FKchi} for further comments).
  \end{enumerate}
\end{remarks}

\subsection{Other classical groups}

\subsubsection{Symplectic groups}

On the other hand, it is clear that Theorem \ref{thm:unifdisppGLSL} does \textit{not} hold for $G=\Sp_n$ ($n\ge 2$ even). Indeed, if
\[g=
  \left(\begin{matrix}
    g_1&g_2\\
    g_3&g_4
  \end{matrix}\right)\in\Sp_{2n}(\F_p),\text{ then }g^{-1}=
  \left(\begin{matrix}
    g_4^t&-g_2^t\\
    -g_3^t&g_1^t
  \end{matrix}\right)\]
 (with respect to the standard symplectic form, where $g_i\in M_{2n}(\F_p)$). Hence, the obstruction for $\SL_2$ can be viewed as coming from the fact that $\SL_2(\F_p)=\Sp_2(\F_p)$.

\subsubsection{Special orthogonal groups}

Let $\Phi\in\GL_n(\F_p)$ be in one of the two equivalence classes of nonsingular symmetric bilinear forms on $\F_p^n$. Since $g^{-1}=\Phi g^t\Phi^{-1}$ for $g\in\GO(\Phi)$, Theorem \ref{thm:unifdisppGLSL} does not hold either in this case. Actually, when $n=2$, the elements of the special orthogonal group corresponding to the form $\diag(\alpha,1)$ ($\alpha\in\F_p^\times$) are themselves not uniformly distributed in $[0,1]^4$ with respect to the embedding \eqref{eq:eta}, since they are of the form $\left(
  \begin{smallmatrix}
    a&-\alpha c\\c&a
  \end{smallmatrix}
\right)$.

\subsection{Distribution of elements}

Nonetheless, the elements themselves are still uniformly distributed in all cases except $\SO_2^\pm$, as in \cite[Theorems 1.5--1.6]{HuLi13} for $\GL_n$ and $\SL_n$.

\begin{theorem}\label{thm:distrElements}
  For $n\ge 1$, let $G$ be\footnote{In what follows, we let $\SO_{n, I_n}$ be the special orthogonal group corresponding to the form given by the identity matrix $I_n$: in other words, $\SO_{n,I_n}(\F_p)$ is the special orthogonal group with square determinant, i.e. $\SO_{n}(\F_p)$ if $n$ is odd, and if $n$ is even, $\SO_{n}^\pm(\F_p)$ if $p\equiv\pm 1\pmod{4}$ respectively.}
  \[\GL_n, \quad \SL_n,\quad\Sp_{n}\ (n\text{ even}),\quad\text{ or}\quad\SO_{n,I_n} \ (n\ge 3).\]
  As $p\to\infty$, the elements $g\in G(\F_p)$ are uniformly distributed in $\Omega=[0,1]^{n^2}$ with respect to the embedding \eqref{eq:eta}. More precisely, for every product of intervals $R$ in $\Omega$,
  \[\frac{|\{g\in G(\F_p) : \eta(g)\in R\}|}{|G(\F_p)|}=\meas(R)+ O_n \left( \frac{(\log{p})^{n^2-\dim G+1}}{\sqrt{p}}\right)\]
  as $p\to\infty$. This also holds with $R\subset \Omega$ an arbitrary convex set if error term is replaced by its $1/(2n^2)$th power.
\end{theorem}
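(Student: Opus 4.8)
The strategy is the standard one for equidistribution statements of this kind: apply the Erdős–Turán–Koksma inequality to reduce the counting problem for boxes (and then convex sets) to bounds on exponential sums indexed by frequency vectors $\boldsymbol{h}\in\Z^{n^2}$, and then estimate those exponential sums using the Riemann Hypothesis over finite fields. Concretely, for $\boldsymbol{h}=(h_{i,j})\in\Z^{n^2}\setminus\{0\}$ we need to bound
\[
S(\boldsymbol{h})=\sum_{g\in G(\F_p)}e_p\!\left(\sum_{i,j}h_{i,j}g_{i,j}\right),
\]
where $e_p(x)=e^{2\pi i x/p}$, and show $|S(\boldsymbol{h})|\ll_n p^{(\dim G)-1/2}$ uniformly in $\boldsymbol{h}$ (or at least, uniformly when the $|h_{i,j}|$ are not all divisible by $p$, which is all that matters after reduction mod $p$). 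Since the leading term $|G(\F_p)|$ is of size $\asymp p^{\dim G}$, such a bound gives a saving of $p^{-1/2}$ relative to the main term, and the power of $\log p$ of the form $(\log p)^{n^2-\dim G+1}$ comes out of the Erdős–Turán–Koksma summation over $\boldsymbol{h}$ once one also bounds nontrivially the $1/r(\boldsymbol{h})$ weights (exactly as in the remark after Theorem \ref{thm:unifdisppGLSL}); the convex-set version follows by the usual trick of approximating a convex body by boxes, which costs a power $1/(2n^2)$.

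The heart of the matter is the bound on $S(\boldsymbol{h})$. I would interpret $g\mapsto e_p(\sum h_{i,j}g_{i,j})$ as (the trace function of) an Artin–Schreier sheaf $\mathcal{L}_{\psi(\ell_{\boldsymbol{h}})}$ on the affine variety $G$, where $\ell_{\boldsymbol{h}}\in\F_p[G]$ is the linear form $g\mapsto\sum h_{i,j}g_{i,j}$ and $\psi$ is a nontrivial additive character. By Grothendieck–Lefschetz and Deligne's Riemann Hypothesis,
\[
|S(\boldsymbol{h})|\le \sum_{k} \dim H^k_c(G_{\overline{\F}_p},\mathcal{L}_{\psi(\ell_{\boldsymbol{h}})})\, p^{k/2},
\]
so the required estimate amounts to showing that (i) the top compactly-supported cohomology $H^{2\dim G}_c$ vanishes, equivalently $\ell_{\boldsymbol{h}}$ is nonconstant on $G$ — which is immediate since $G$ is a smooth irreducible variety of dimension $\ge 1$ and $\ell_{\boldsymbol{h}}$ is a nonzero linear form not vanishing identically on $G$ (here one uses that $G$ is not contained in any affine hyperplane, true for all four families and all the relevant $n$), and (ii) the sum of Betti numbers $\sum_k \dim H^k_c(G,\mathcal{L}_{\psi(\ell_{\boldsymbol{h}})})$ is bounded by a constant depending only on $n$, uniformly in $\boldsymbol{h}$ and $p$. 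Point (ii) is exactly where the stratification theorem of Fouvry–Katz–Laumon enters: applied to the family of linear forms $(\ell_{\boldsymbol{h}})_{\boldsymbol{h}}$ on the fixed variety $G$, it provides a uniform bound on these Betti sums (and more precisely a stratification of the parameter space controlling the dimensions of the cohomology), with the constant depending only on $G$, hence only on $n$.

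The main obstacle is verifying that the geometric input is genuinely uniform in $\boldsymbol{h}$ and that the degenerate directions are harmless: one must check that for \emph{every} nonzero $\boldsymbol{h}$ (mod $p$), the form $\ell_{\boldsymbol{h}}$ restricted to $G$ is nonconstant — so that $H^{2\dim G}_c$ vanishes and no term of size $p^{\dim G}$ survives — and that the Fouvry–Katz–Laumon bound is being invoked with the correct geometric setup (the variety $G\subset \mathbb{A}^{n^2}$ fixed, the linear forms varying in the dual space $\mathbb{A}^{n^2}$). The cases $\SO_2^\pm$ are genuinely excluded because there $G$ \emph{is} contained in a proper linear subspace of $M_2$ (the matrices are of the form $\left(\begin{smallmatrix}a&-\alpha c\\ c&a\end{smallmatrix}\right)$), so some nonzero $\ell_{\boldsymbol{h}}$ is constant (indeed zero) on $G$; for all the listed groups this does not happen, and once that is checked the rest is bookkeeping: plug the bound $|S(\boldsymbol{h})|\ll_n p^{\dim G-1/2}$ into Erdős–Turán–Koksma, sum over $\boldsymbol{h}$ in a box of side $\asymp \sqrt{p}$ while exploiting the $1/r(\boldsymbol{h})$ savings, and normalize by $|G(\F_p)|=p^{\dim G}(1+O_n(p^{-1}))$ to obtain the stated error term $O_n((\log p)^{n^2-\dim G+1}/\sqrt{p})$, with the convex-set refinement following formally.
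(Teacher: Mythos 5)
Your overall architecture (Erd\H{o}s--Tur\'an--Koksma plus Grothendieck--Lefschetz and Deligne's Riemann Hypothesis for the Artin--Schreier sheaf $\Lc_{\psi(\ell_{\bs h})}$ on $G$) is the paper's, but there is a genuine gap in how you obtain the exponent $n^2-\dim G+1$ of the logarithm. You only ever use the \emph{uniform} bound $|S(\bs h)|\ll_n p^{\dim G-1/2}$ for all nonzero $\bs h$, and you invoke the Fouvry--Katz--Laumon stratification merely to get uniformity of the Betti-number constant (which in fact comes from Katz's ``Sums of Betti numbers'' bound, not from stratification). With only the uniform bound, the Erd\H{o}s--Tur\'an--Koksma sum contributes $\sum_{0<\|\bs h\|_\infty\le T}1/r(\bs h)\asymp(\log T)^{n^2}$, so you would prove the theorem with error $(\log p)^{n^2}/\sqrt p$ --- equidistribution, yes, but not the stated error term. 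The stratification theorem is needed for a different purpose: it produces closed subschemes $X_{n^2}\subset\dots\subset X_0=\A^{n^2}$ with $\dim X_j\le n^2-j$ such that $|S(\bs h)|/|G(\F_p)|\ll p^{(j-1)/2}/|G(\F_p)|^{1/2}$ for $\bs h\notin X_j(\F_p)$, i.e.\ a \emph{much better than square-root} bound for generic $\bs h$. One then sums $1/r(\bs h)$ stratum by stratum, using that $\sum_{\|\bs h\|_\infty\le T}\delta_{\bs h\bmod p\in X_j(\F_p)}/r(\bs h)\ll(\log T)^{\dim X_j}$; it is this lattice-point count on the low-dimensional strata, not a generic ``saving from the $1/r(\bs h)$ weights'', that lowers the exponent from $n^2$ to $n^2-\dim G+1$. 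This step (Proposition \ref{prop:stratification} in the paper) is absent from your plan.

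A second, smaller gap: you assert that no nonzero linear form $\ell_{\bs h}$ is constant on $G$ because ``$G$ is not contained in any affine hyperplane, true for all four families'', but you do not verify it, and it is not automatic --- precisely this fails for $\SO_2^\pm$, as you note. For $\Sp_{2n}$ and $\SO_{n,I_n}$ ($n\ge3$) this requires an explicit argument (the paper uses elementary matrices, block matrices built from $\GL_n$, permutation matrices in $A_n$ and diagonal sign changes). Without that verification the vanishing of $H^{2\dim G}_c$ --- hence $\delta=0$ in the main term --- is not established for the orthogonal and symplectic cases. Also note that nonconstancy of $\ell_{\bs h}$ on $G(\overline\F_p)$ is what is needed for geometric nontriviality of the sheaf, which for an Artin--Schreier sheaf additionally requires ruling out $\ell_{\bs h}=f^p-f+c$; for degree-one forms this is harmless, but it deserves a word.
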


\begin{remark}
  Note that the exponent of the logarithms in the error term is $2$, $3$, $(n^2+1)/2$ if $G=\GL_n$, $\SL_n$ or $\Sp_n$ respectively. Theorem \ref{thm:distrElements} improves the errors terms:
\begin{itemize}
\item in \cite{HuLi13}, handling $\GL_n$ and $\SL_n$ using \cite{HuLi12}, which are $p^{-n/(n^2+1)}$.
\item in Theorem \ref{thm:unifdisppGLSL} for the joint distribution.
\end{itemize}
\end{remark}

In the same vein as Theorem \ref{thm:GLdet}, we get the following generalization:
\begin{theorem}\label{thm:distrElementsf}
  Under the assumptions of Theorem \ref{thm:distrElements}, let $H$ be as in Theorem \ref{thm:GLdet}. Then, for any product of intervals $R\subset\Omega$,
  \[\frac{|\{g\in H : \eta(g)\in R\}|}{|H|}=\meas(R)+ O_{n} \left(\frac{\sqrt{p}}{|U|}\left(\log \frac{|U|}{\sqrt{p}}\right)^{n^2-\dim G+1}\right).\]
\end{theorem}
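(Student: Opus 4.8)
The plan is to reduce Theorem \ref{thm:distrElementsf} to Theorem \ref{thm:distrElements} (and to the techniques used to prove Theorem \ref{thm:GLdet}) by a character-sum / Fourier-analytic argument. First I would apply the Erd\H{o}s--Tur\'an--Koksma inequality on the torus $\Omega=[0,1]^{n^2}$ to the point set $\{\eta(g):g\in H\}$, so that the discrepancy is controlled by
\[
\frac{1}{|H|}\sum_{0<\|\bs h\|_\infty\le K}\frac{1}{r(\bs h)}\Bigl|\sum_{g\in H}e\bigl(\langle \bs h,\eta(g)\rangle\bigr)\Bigr|
\]
plus a main term of size $O(1/K)$, where $r(\bs h)=\prod_i\max(1,|h_i|)$ as usual; nontrivially bounding the $1/r(\bs h)$ factors (as noted in the remark following Theorem \ref{thm:unifdisppGLSL}) is what produces the logarithmic powers. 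The inner sum over $g\in H=f^{-1}(U)$ is then expanded using the characters of $\F_p^\times$: writing $\mathbf 1_{f(g)\in U}=\frac{|U|}{p-1}\sum_{\chi\ \text{trivial on}\ U}\chi(f(g))$, one gets
\[
\sum_{g\in H}e\bigl(\langle\bs h,\eta(g)\rangle\bigr)=\frac{|U|}{p-1}\sum_{\substack{\chi\bmod p\\ \chi|_U=1}}\ \sum_{g\in G(\F_p)}\chi(f(g))\,\psi_{\bs h}(g),
\]
where $\psi_{\bs h}(g)=e_p\bigl(\sum_{i,j}h_{i,j}g_{i,j}\bigr)$ is an additive character composed with the linear (in the entries) function attached to $\bs h$.

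The heart of the matter is then to bound, for each nontrivial pair $(\chi,\bs h)$, the sum $\sum_{g\in G(\F_p)}\chi(f(g))\psi_{\bs h}(g)$ by $O_n(\sqrt p\cdot p^{\dim G-1/2})$, i.e. square-root cancellation relative to $|G(\F_p)|\sim p^{\dim G}$, with the extra $\sqrt p$ saved against the number of relevant $\chi$ (there are $(p-1)/|U|$ of them, matching the $|U|/(p-1)$ prefactor to give the announced $\sqrt p/|U|$). This is exactly the input that the paper obtains from Deligne's equidistribution / Weil II applied to a suitable $\ell$-adic sheaf on $G$ — here the tensor product of the Artin--Schreier sheaf $\mathcal L_{\psi_{\bs h}}$ (pulled back along the linear form on the entries, which is nonconstant on $G$ for $\bs h\ne 0$) with the Kummer sheaf $\mathcal L_{\chi(f)}$ (nonconstant on $G$ by the Rosenlicht remark \ref{ref:nonvanishingf}, since $f$ is nonconstant and $\chi$ nontrivial). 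One invokes the Fouvry--Katz--Laumon stratification theorem to control the dimensions of the bad loci uniformly in $\bs h$ and $\chi$, so that the cohomological bound holds with implied constant depending only on $n$ (and on the complexity of $f$, which is fixed). This mirrors precisely the proof of Theorem \ref{thm:GLdet}; the only genuine change is that the sheaf on $G$ is the elementary one attached to the entry-linear form rather than the more elaborate sheaf used for the pair $(g,g^{-1}x)$, which is why the logarithmic exponent drops from $n^2+1$ to $n^2-\dim G+1$ (the $\dim G$ coming from the fact that the linear form $\langle\bs h,\eta(g)\rangle$ only effectively involves $\dim G$ of the coordinates after restricting to $G$, killing the contribution of many $\bs h$).

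Assembling these pieces: substitute the cohomological bound into the character expansion, sum over the $(p-1)/|U|$ characters $\chi$ trivial on $U$, then sum over $\bs h$ with $\|\bs h\|_\infty\le K$ weighting by $1/r(\bs h)$ (which contributes $(\log K)^{n^2-\dim G}$ after using the nontrivial-on-$G$ count of surviving $\bs h$), optimize $K\asymp |U|/\sqrt p$ to balance the main term $1/K$ against the character-sum term, and the claimed bound $O_n\bigl(\tfrac{\sqrt p}{|U|}(\log(|U|/\sqrt p))^{n^2-\dim G+1}\bigr)$ falls out; the convex-set version follows from the product-of-intervals version by the standard approximation of a convex body by boxes, losing a power $1/(2n^2)$. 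The main obstacle I anticipate is purely in the geometric bookkeeping: verifying that for \emph{every} nonzero $\bs h$ the pullback of $\mathcal L_{\psi_{\bs h}}$ to $G$ is nontrivial (equivalently, that no nonzero linear combination of the coordinate functions is constant on $G$) and that the relevant Euler characteristics / conductors of $\mathcal L_{\psi_{\bs h}}\otimes\mathcal L_{\chi(f)}$ on $G$ are bounded independently of $\bs h$ and $\chi$ — this uniformity is exactly what the Fouvry--Katz--Laumon stratification is invoked to supply, and it is the step that requires care rather than cleverness, since everything else is a transcription of the proof of Theorem \ref{thm:GLdet} with a simpler sheaf.
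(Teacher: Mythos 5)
Your route is the same as the paper's: Erd\H{o}s--Tur\'an--Koksma on $\Omega=[0,1]^{n^2}$, detection of $H=f^{-1}(U)$ by the characters of $\F_p^\times$ trivial on $U$ (paying the factor $|G(\F_p)/H|=(p-1)/|U|$, which against the square-root cancellation produces the $\sqrt p/|U|$), and the optimization $T\asymp |U|/\sqrt p$. The one place where your argument, as written, would not deliver the stated error term is the exponent $n^2-\dim G+1$ of the logarithm. You place ``the heart of the matter'' in the pointwise bound for each fixed nonzero $\bs h$ and each $\chi$ (Proposition \ref{prop:expSumG}), and you assign to the Fouvry--Katz--Laumon stratification only the job of making implied constants uniform in $(\bs h,\chi)$. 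But the uniformity of constants already comes from Katz's Betti number bounds, and with only the pointwise bound $\ll p^{-1/2}$ the weighted sum $\sum_{0<||\bs h||_\infty\le T}r(\bs h)^{-1}p^{-1/2}$ is $\asymp (\log T)^{n^2}p^{-1/2}$, giving exponent $n^2$ --- the paper makes exactly this remark after the proof. The actual role of the stratification (Proposition \ref{prop:stratification}) is to supply the \emph{improved} bound $\ll p^{(j-1)/2}/|G(\F_p)|^{1/2}$ for every $\bs h$ outside a closed subscheme $X_j$ of dimension $\le n^2-j$ in the dual ($\bs h$-)space; the sum of $r(\bs h)^{-1}$ over lattice points reducing into $X_j(\F_p)$ costs only $(\log T)^{n^2-j}$, and the worst stratum $j=\dim G-1$ yields $(\log T)^{n^2-\dim G+1}p^{-1/2}$. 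Your heuristic that ``the linear form only effectively involves $\dim G$ of the coordinates after restricting to $G$'' is not the mechanism: no nonzero $\bs h$ is killed outright (indeed the paper must prove separately, in the section on constant functions, that $g\mapsto \bs h\cdot g$ is nonconstant on $G$ for every $\bs h\ne 0$); the savings come from the average over $\bs h$ via the strata. With Proposition \ref{prop:stratification} substituted for your pointwise bound, the rest of your assembly is exactly the paper's proof.
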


\subsection{Distribution with entries in intervals}

A related question in $G=(\Z/n)^\times$ is the distribution of the solutions to $gh=x$, for some fixed $x\in G$, when $1\le g,h\le p-1$ lie in fixed intervals. It is a conjecture (see \cite[Section 3.1]{Shpar12}) that for any $\varepsilon>0$ and $p$ large enough, there exist integers $g,h$ such that $gh=1\pmod{p}$ with $|g|,|h|\le p^{1/2+\varepsilon}$. The best current result seems to be $|g|,|h|\ll p^{3/4}$, due to Garaev (note the absence of a logarithmic factor).\\

In matrix groups, we can similarly fix the entries of the matrices in intervals, yielding the following:
\begin{theorem}\label{thm:intervals}
  Let $G$ be as in Theorem \ref{thm:unifdisppGLSL}. For $p$ a prime, let $E,F\subset [0,p-1]^{n^2}$ be products of intervals. Then, for any $x\in G(\Z)$, viewing $M_n(\F_p)$ embedded in $[0,p-1]^{n^2}$, the density
  \begin{equation}
    \label{eq:densityIntervals}
    \frac{|\{g\in G(\F_p): g\in E\text{ and }g^{-1}x\in F\}|}{|G(\F_p)|}
  \end{equation}
  is given by
  \[\frac{\meas(E\times F)}{p^{2n^2}} + O_{n} \left(\frac{(\log{p})^{2n^2}}{p^{\dim G/2}}\left(1+\left(\frac{\sum_{1\le k,l\le n}\meas(E_{kl})}{\sqrt{p}}\right)^{\dim G-1}\right)\right),\]
  if $E=\prod_{1\le k,l\le n}E_{kl}$.
\end{theorem}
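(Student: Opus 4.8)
The plan is to detect the event "$g \in E$ and $g^{-1}x \in F$" using characters on $\F_p$, mimicking the proof of Theorems \ref{thm:unifdisppGLSL} and \ref{thm:intervals} but now running the Erd\H{o}s--Tur\'an--Koksma argument on the scale of a single residue class rather than on the $[0,1]^{n^2}$ torus. Concretely, I would write the density \eqref{eq:densityIntervals} as
\[
\frac{1}{|G(\F_p)|}\sum_{g\in G(\F_p)} \mathbf{1}_{E}(g)\,\mathbf{1}_{F}(g^{-1}x),
\]
and expand each indicator of a product of intervals in $[0,p-1]^{n^2}$ via the additive characters $\psi(t) = e(t/p)$ of $\F_p$. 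The key input is the estimate, obtained in the proof of Theorem \ref{thm:unifdisppGLSL}, for the exponential sums
\[
S(\bs a,\bs b) = \sum_{g\in G(\F_p)} \psi\bigl(\langle \bs a, g\rangle + \langle \bs b, g^{-1}x\rangle\bigr),
\]
where $\bs a,\bs b\in\F_p^{n^2}$ encode linear forms in the entries of $g$ and of $g^{-1}x$. By Deligne's equidistribution machinery applied to the relevant sheaf on $G$ together with the Fouvry--Katz--Laumon stratification, one has $S(\bs 0,\bs 0) = |G(\F_p)| = p^{\dim G}(1 + O(p^{-1/2}))$ and, crucially, $S(\bs a, \bs b) \ll_n p^{(\dim G - 1)/2}$ (or $p^{(\dim G)/2 - 1}$ with a logarithmic loss, whichever the earlier proof gives) whenever $(\bs a, \bs b)\ne(\bs 0,\bs 0)$, uniformly in the nonzero frequency. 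This is exactly the square-root cancellation already exploited for Theorem \ref{thm:unifdisppGLSL}.

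With that bound in hand, the combinatorial part is bookkeeping. Using the standard completion/Fourier expansion of the indicator of an interval $\{c\le t \le d\}\subset[0,p-1]$, one gets
\[
\mathbf{1}_{E}(g) = \sum_{\bs a\in\F_p^{n^2}} c_{\bs a}\,\psi(\langle \bs a, g\rangle),\qquad
\mathbf{1}_{F}(g^{-1}x) = \sum_{\bs b\in\F_p^{n^2}} d_{\bs b}\,\psi(\langle \bs b, g^{-1}x\rangle),
\]
with $c_{\bs 0} = \meas(E)/p^{n^2}$, $d_{\bs 0} = \meas(F)/p^{n^2}$, and coefficient bounds $|c_{\bs a}| \ll \prod_{k,l}\min\!\bigl(\meas(E_{kl})/p,\ 1/\|a_{kl}/p\|\bigr)/p$ (and similarly for $d_{\bs b}$), so that $\sum_{\bs a}|c_{\bs a}| \ll_n p^{-n^2}\prod_{k,l}(\meas(E_{kl}) + \log p)$. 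The main term comes from $\bs a = \bs b = \bs 0$, giving $c_{\bs 0} d_{\bs 0}\,|G(\F_p)|/|G(\F_p)| = \meas(E\times F)/p^{2n^2}$ after absorbing the $p^{-1/2}$ from $|G(\F_p)|$ into the error. Every other term is bounded by $|c_{\bs a}|\,|d_{\bs b}|\,|S(\bs a,\bs b)|/|G(\F_p)| \ll_n |c_{\bs a}|\,|d_{\bs b}|\,p^{-\dim G/2}$ (times a possible $\log$), and summing over all $(\bs a,\bs b)\ne \bs 0$ via the coefficient sums yields
\[
\ll_n \frac{(\log p)^{2n^2}}{p^{\dim G/2}}\Bigl(\prod_{k,l}\bigl(1 + \tfrac{\meas(E_{kl})}{\sqrt p}\bigr) - 1 + \text{(terms from one of $\bs a,\bs b$ zero)}\Bigr).
\]
Splitting according to which of $\bs a,\bs b$ vanishes and crudely bounding $\prod(1 + \meas(E_{kl})/\sqrt p) - 1 \ll_n (\sum_{k,l}\meas(E_{kl})/\sqrt p)^{\dim G - 1} + \cdots$ — here one uses that the number of nonzero coordinates among the $n^2$ entries is at most $\dim G$ after accounting for the equations cutting out $G$, which is what produces the exponent $\dim G - 1$ rather than $n^2 - 1$ — gives the stated error term, the "$1$" accounting for the case where the $\bs a$-side (or $\bs b$-side) is trivial.

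**The main obstacle** is getting the exponent $\dim G - 1$ (rather than $n^2 - 1$) in the second factor of the error term. The naive estimate $\sum_{\bs a\ne\bs 0}|c_{\bs a}| \ll_n p^{-n^2}\bigl((\log p + \max_{k,l}\meas(E_{kl}))^{n^2} - (\log p)^{\text{stuff}}\bigr)$ would produce $n^2 - 1$ in the exponent, which is weaker. To sharpen this one must exploit that the exponential sum $S(\bs a,\bs b)$ only depends on the pair $(\bs a,\bs b)$ through its image modulo the linear relations among the coordinate functions on $G$ — e.g. the vanishing of the linear parts near identity, or more robustly the fact that the Fourier support effectively lives on a $\dim G$-dimensional space — so that in the worst case only $\dim G$ of the frequency coordinates range over a full interval of length $\asymp p$ while the rest are constrained. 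Concretely I expect to organize the sum over frequencies so that, after using the sheaf-theoretic bound which is insensitive to the "extra" $n^2 - \dim G$ directions, the surviving multi-interval $\prod_{k,l}(1 + \meas(E_{kl})/\sqrt p)$ telescopes with the right number of factors. Making this reduction clean — identifying precisely which frequency directions are "free" and which are forced, uniformly in $E$ — is the delicate point; the rest is the same completion-of-sums and Erd\H{o}s--Tur\'an-type estimation already carried out for Theorems \ref{thm:unifdisppGLSL} and \ref{thm:intervals}, now without the final rescaling to $[0,1]^{n^2}$.
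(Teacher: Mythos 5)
Your overall skeleton (complete the indicators of $E$ and $F$ into additive characters, extract the main term from the zero frequency, bound the interval Fourier coefficients by $\prod_{k,l}\min(\meas(E_{kl}),\|u_{kl}/p\|^{-1})$) is exactly the paper's setup. But the heart of the proof --- where the factor $(L_E/\sqrt p)^{\dim G-1}$ with $L_E=\sum_{k,l}\meas(E_{kl})$ comes from --- is missing, and the mechanism you propose for it would fail. First, the uniform bound you invoke, $S(\bs a,\bs b)\ll p^{(\dim G-1)/2}$ for all nonzero frequencies, is not what Proposition \ref{prop:expSumG} gives (that proposition gives only $|S(\bs a,\bs b)|/|G(\F_p)|\ll p^{-1/2}$, i.e.\ $S\ll p^{\dim G-1/2}$, a single power of $p^{1/2}$ saved, not $|G|^{1/2}$), and no bound of the strength you state holds uniformly: on the deep strata of the frequency space the sum really can be as large as $p^{\dim G-1/2}$. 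Plugging the bound that \emph{is} uniformly available into your computation yields only an error $\ll(\log p)^{2n^2}p^{-1/2}$, far weaker than the claimed $p^{-\dim G/2}$. Second, your proposed fix --- that ``the Fourier support effectively lives on a $\dim G$-dimensional space'' so that only $\dim G$ frequency coordinates are free --- is not correct: $S(u,v)$ is a genuine function of all $n^2$ coordinates of $u$ and there is no linear constraint on the frequencies.

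What actually produces the exponent $\dim G-1$ is the Fouvry--Katz stratification used \emph{quantitatively}: there are closed subschemes $X_{n^2}\subset\cdots\subset X_1\subset X_0=\A^{n^2}$ with $\dim X_j\le n^2-j$ such that $|S(u,v)|/|G(\F_p)|\ll p^{(j-1)/2}|G(\F_p)|^{-1/2}$ for $u\notin X_j(\F_p)$. So for \emph{generic} $u$ (off $X_1$) one has full square-root cancellation $\ll|G(\F_p)|^{-1/2}\approx p^{-\dim G/2}$, and the weaker bounds occur only on loci of controlled codimension. One then bounds, for each $j$, the weighted count $\sum_{u\in X_j(\F_p)}\prod_{k,l}\min(\meas(E_{kl}),\|u_{kl}/p\|^{-1})\ll(p\log p)^{n^2-j}e_j(\meas(E_{kl}))$, where $e_j$ is the $j$th elementary symmetric polynomial (a refinement of \cite[Lemma 9.5]{FK01}), and converts $e_j$ into powers of $L_E$ via Maclaurin's inequality; summing the geometric-type series over $0\le j\le\dim G-1$ gives $\max(1,(L_E/\sqrt p)^{\dim G-1})$. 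Without this stratified treatment of the frequency sum your argument cannot reach the stated error term.
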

\begin{corollary}\label{cor:intervals}
  Let $G$ be as in Theorem \ref{thm:unifdisppGLSL} and let $p$ be a prime. For any $\varepsilon>0$ and $x\in G(\Z)$, there exist $g,h\in G(\F_p)$ such that $gh=x$ and whose entries, seen in $[0,p-1]$, are all
  \[\ll_{n,\varepsilon}
    \begin{cases}
      p^{1-\frac{1}{2(n^2+1)}+\varepsilon}&:G=\GL_n\\
      p^{1-\frac{1}{2(n^2+2)}+\varepsilon}&:G=\SL_n.
    \end{cases}
  \]
\end{corollary}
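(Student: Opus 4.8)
\medskip
\noindent\textbf{Proof strategy.}
The plan is to deduce Corollary \ref{cor:intervals} from Theorem \ref{thm:intervals} by optimising over the size of the intervals. Fix $x\in G(\Z)$ and a parameter $1\le L<p$ to be chosen, and apply Theorem \ref{thm:intervals} with the cubes $E=F=[0,L]^{n^2}$, viewed inside $[0,p-1]^{n^2}$. If the density \eqref{eq:densityIntervals} attached to this choice is strictly positive, then there exists $g\in G(\F_p)$ with $g\in E$ and $g^{-1}x\in F$; setting $h=g^{-1}x\in G(\F_p)$ we obtain $gh=x$ with all entries of both $g$ and $h$ lying in $[0,L]$. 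Thus it suffices to exhibit such an $L$ with $L\ll_{n,\varepsilon}p^{1-1/(2(n^2+1))+\varepsilon}$ when $G=\GL_n$, and $L\ll_{n,\varepsilon}p^{1-1/(2(n^2+2))+\varepsilon}$ when $G=\SL_n$.

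For $E=F=[0,L]^{n^2}$ the main term of Theorem \ref{thm:intervals} is $\meas(E\times F)/p^{2n^2}=(L/p)^{2n^2}$, and since $\sum_{1\le k,l\le n}\meas(E_{kl})=n^2L$, the error term is $O_n\!\big((\log p)^{2n^2}\,p^{-\dim G/2}\,(1+(n^2L/\sqrt p)^{\dim G-1})\big)$. I would then write $L=p^{1-\delta}$ for a small $\delta\in(0,1/2)$: then $n^2L/\sqrt p\to\infty$, so the error is $\ll_n(\log p)^{2n^2}p^{-\dim G/2}(n^2L/\sqrt p)^{\dim G-1}$, whose exponent of $p$ collapses, using $L=p^{1-\delta}$, to $-1/2-\delta(\dim G-1)$, while the main term has exponent $-2n^2\delta$. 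Comparing the two and absorbing the power of $\log p$ into a $p^{o(1)}$, the density is positive as soon as $\delta(2n^2-\dim G+1)<1/2$. Since $2n^2-\dim G+1$ equals $n^2+1$ for $\GL_n$ and $n^2+2$ for $\SL_n$, one takes $\delta=\frac{1}{2(n^2+1)}-\varepsilon$ (resp. $\frac{1}{2(n^2+2)}-\varepsilon$), so that $L=p^{1-\delta}$ has exactly the claimed size and, for $p$ large enough in terms of $n$ and $\varepsilon$, the density is $>0$; the finitely many remaining primes are handled by enlarging the implied constant.

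I expect no real obstacle beyond Theorem \ref{thm:intervals} itself: the single point requiring care is the trade-off built into that error term. The factor $(\sum_{k,l}\meas(E_{kl})/\sqrt p)^{\dim G-1}$ \emph{increases} with $L$ whereas the main term $(L/p)^{2n^2}$ \emph{decreases} as $L$ shrinks, so $L$ cannot be pushed below about $p^{1-1/(2(2n^2-\dim G+1))}$; one must additionally check that one remains in the regime $L\gg\sqrt p$ in which this factor dominates the constant $1$ in the error (automatic, as the optimal $\delta$ lies well below $1/2$), and verify that the logarithmic factor only contributes $p^{o(1)}$, which is absorbed into $p^\varepsilon$.
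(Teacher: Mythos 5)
Your proposal is correct and follows essentially the same route as the paper: apply Theorem \ref{thm:intervals} with $E=F$ cubes of side $L=p^{1-\delta}$, compare the exponents of the main term $p^{-2n^2\delta}$ and of the error term $p^{-1/2-\delta(\dim G-1)}$, and observe that positivity of the density holds precisely when $\delta(2n^2-\dim G+1)<1/2$, which gives the exponents $n^2+1$ and $n^2+2$ for $\GL_n$ and $\SL_n$ respectively. The optimisation and the handling of the logarithmic factor match the paper's argument.
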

We also refer the reader to \cite{AhmSpar07} for related questions concerning matrices, and to \cite{Fouv00}, \cite[Corollary 1.5]{FK01} for general results about points on varieties in hypercubes.

\subsection{Higher-dimensional variant}
Using the same techniques, we can get an analogue of Theorem \ref{thm:unifdisppGLSL} for the uniform distribution of solutions to
\[g_1\dots g_r=x \hspace{0.5cm} (g_i\in G(\F_p))\]
for any $r\ge 2$ and $x$ fixed:

\begin{theorem}\label{thm:unifdisppGLSLr}
  Let $G$ and $x$ be as in Theorem \ref{thm:unifdisppGLSL}, and let $r\ge 2$ be an integer. As $p\to\infty$, the elements
  \[A_x(\bs g)=\left(g_1,\dots,g_{r-1},(g_1\dots g_{r-1})^{-1}x\right)\in M_n(\F_p)^r \hspace{0.5cm} (\bs g\in G(\F_p)^{r-1})\]
  are uniformly distributed in $\Omega=[0,1]^{rn^2}$ with respect to the embedding \eqref{eq:eta}. More precisely, for every product of intervals $R$ in $\Omega$,
  \[\frac{|\{\bs g\in G(\F_p)^{r-1} : \eta(A_x(\bs g))\in R\}|}{|G(\F_p)|^{r-1}}=
    \meas(R)+
    \begin{cases}
      O_{n,r} \left(\frac{(\log{p})^{n^2+1}}{\sqrt{p}}\right)&:G=\GL_n\\
      O_{n,r} \left(\frac{(\log{p})^{n^2+2}}{\sqrt{p}}\right)&:G=\SL_n
    \end{cases}
    \]
  as $p\to\infty$. This also holds with $R\subset \Omega$ an arbitrary convex set if the error terms are replaced by their $1/(2n^2)$th powers.
\end{theorem}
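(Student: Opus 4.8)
The plan is to reduce Theorem \ref{thm:unifdisppGLSLr} to the $r=2$ case of Theorem \ref{thm:unifdisppGLSL} by the standard Erd\H{o}s--Tur\'an--Koksma (ETK) machinery, so that the only new input is a bound on the relevant multidimensional exponential sums on $G(\F_p)^{r-1}$. Concretely, for a frequency vector $\bs m=(\bs m_1,\dots,\bs m_r)\in(\Z^{n^2})^r\setminus\{0\}$ one must estimate
\[
S(\bs m)=\sum_{\bs g\in G(\F_p)^{r-1}}\psi\!\left(\sum_{k=1}^{r-1}\langle\bs m_k,g_k\rangle+\langle\bs m_r,(g_1\cdots g_{r-1})^{-1}x\rangle\right),
\]
where $\psi$ is a fixed nontrivial additive character of $\F_p$ and $\langle\cdot,\cdot\rangle$ denotes the entrywise pairing of matrices with $\Z^{n^2}$. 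The claim will be that $|S(\bs m)|\ll_{n,r}|G(\F_p)|^{r-1}p^{-\dim G/2}$ (equivalently $\ll_{n,r}p^{(r-1)\dim G-\dim G/2}$), uniformly in $\bs m$, together with the sharper square-root-cancellation bound when $\bs m_r\neq 0$; combining this with the ETK inequality and the nontrivial treatment of the $1/r(\bs h)$ weights alluded to in the Remark after Theorem \ref{thm:unifdisppGLSL} will produce the stated main term $\meas(R)$ with the error $(\log p)^{n^2+1}/\sqrt p$ (resp. $(\log p)^{n^2+2}/\sqrt p$ for $\SL_n$), the exponents of the logarithm being exactly as in the $r=2$ case because the number of ``free'' summation variables increases only the implied constant, not the number of geometric strata.

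The key step is the sheaf-theoretic bound on $S(\bs m)$. First I would, exactly as in the proof of Theorem \ref{thm:unifdisppGLSL}, introduce on the variety $G^{r-1}$ (over $\overline{\F_p}$) the rank-one Artin--Schreier-type lisse sheaf $\mathcal{L}$ whose trace function at $\bs g$ is $\psi(\sum_{k<r}\langle\bs m_k,g_k\rangle+\langle\bs m_r,(g_1\cdots g_{r-1})^{-1}x\rangle)$; this is pulled back from the sheaf on $G$ used in the $r=2$ case along the multiplication-and-inversion morphism $\mu_x:(g_1,\dots,g_{r-1})\mapsto(g_1,\dots,g_{r-1},(g_1\cdots g_{r-1})^{-1}x)$, twisted by the linear characters in the first $r-1$ blocks. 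One then applies Deligne's form of the Weil conjectures (the Grothendieck--Lefschetz trace formula plus weight bounds on $H^i_c$) together with the Fouvry--Katz--Laumon stratification theorem: the latter gives, uniformly in the parameters $\bs m$, a stratification of the parameter space such that on the open dense stratum all the compactly-supported cohomology above the middle degree vanishes and the middle one has controlled dimension, yielding square-root cancellation, while on the lower-dimensional strata (which is where the genuinely degenerate $\bs m$, e.g. $\bs m_r=0$, live) one still gets a saving of at least $p^{\dim G/2}$ over the trivial bound. The induction on $r$ is transparent here: integrating out $g_{r-1}$ reduces a length-$r$ sum to a length-$(r-1)$ sum with a modified (still admissible) sheaf on $G$, or one simply observes $\mu_x$ is a smooth surjection with fibers $\cong G^{r-2}$ onto the graph of $g\mapsto g^{-1}x$, so the cohomology of $\mu_x^*(\text{sheaf on }G)$ is, up to shift and Tate twist, that of the $r=2$ sheaf.

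I expect the main obstacle to be not the structure of the argument but the bookkeeping needed to make the Fouvry--Katz--Laumon input \emph{uniform in $r$ as well as in $p$ and $\bs m$}: one must check that the family of sheaves $\{\mathcal{L}(\bs m)\}$ on $G^{r-1}$, parametrized by $\bs m$, fits the hypotheses of \cite{FK01}'s stratification theorem with conductor (rank plus ramification data) bounded in terms of $n$ and $r$ only, and that the "generic" stratum where full square-root cancellation holds is exactly the complement of the locus where some $\bs m_k$ (for $k<r$) fails a genericity condition relative to the matrix-multiplication map. This is where the condition $n\ge 3$ for $\SL_n$ and $n\ge 2$ for $\GL_n$ re-enters: one needs the morphism $g\mapsto(\text{entries of }g,\text{ entries of }g^{-1}x)$ restricted to $G$ to have the property that no nontrivial linear combination of the coordinate functions is constant on $G$, which fails precisely for $\SL_2=\Sp_2$. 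Once that geometric input is in place for $G^{r-1}$ (and it is essentially inherited from $G$ via the smooth morphism $\mu_x$), the remaining steps — assembling $S(\bs m)$ bounds into the ETK sum, bounding $\sum_{\bs m}1/r(\bs m)$ by a product of harmonic-type sums, and the passage from products of intervals to convex sets by the standard approximation that costs a power $1/(2n^2)$ — are routine and identical to the $r=2$ case, which is why the logarithmic exponents and the convex-set exponent are unchanged.
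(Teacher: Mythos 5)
Your overall strategy is the same as the paper's: the paper's entire proof of Theorem \ref{thm:unifdisppGLSLr} consists of observing that one may replace $G$ by $G^{r-1}$ (and $M_n(\F_p)^2$ by $M_n(\F_p)^r$) in the proof of Theorem \ref{thm:unifdisppGLSL}, i.e.\ run the Erd\H{o}s--Tur\'an--Koksma inequality, apply Proposition \ref{prop:expSumG} (Deligne plus Katz's Betti bounds) and Proposition \ref{prop:stratification} (Fouvry--Katz) to the connected group $G^{r-1}$, and deduce the non-constancy criterion ``the phase is constant iff $\bs h=0$'' from Proposition \ref{prop:constantGLSL} by specializing all but one $g_i$ to the identity. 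Your proposal identifies all of these ingredients, including the reason the logarithmic exponents do not grow with $r$ (only one block of frequencies is treated by stratification; the rest are absorbed into a maximum).

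However, two of your technical assertions are wrong and would fail if relied upon. First, the claimed bound $|S(\bs m)|\ll_{n,r}|G(\F_p)|^{r-1}p^{-\dim G/2}$ \emph{uniformly} in $\bs m\neq 0$, and the statement that on the lower-dimensional strata ``one still gets a saving of at least $p^{\dim G/2}$,'' are both false: the content of \eqref{eq:FK} is precisely that the saving \emph{degrades} from $p^{\dim G/2}$ on the generic stratum down to only $p^{1/2}$ (Proposition \ref{prop:expSumG}) on the deepest strata, and the whole point of Proposition \ref{prop:stratification} is that the small strata are low-dimensional so that the degradation is harmless on average over the linear frequencies. (Also, $\bs m_r$ is not one of the stratified variables; it is a parameter of the function $\hat f$ and is handled by a maximum.) Second, the reduction of the cohomology of $\Lc(\bs m)$ on $G^{r-1}$ to that of the $r=2$ sheaf ``up to shift and Tate twist'' does not work: the map $\mu_x$ as you define it is injective (a graph), and the map $(g_1,\dots,g_{r-1})\mapsto(g_1,(g_1\cdots g_{r-1})^{-1}x)$, which does have fibers $\cong G^{r-2}$, does not trivialize $\Lc(\bs m)$ along its fibers because of the Artin--Schreier twists $\psi(\langle\bs m_k,g_k\rangle)$ for $2\le k\le r-1$; so no Leray/shift argument identifies the two cohomologies. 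No such reduction is needed: Deligne's theorem, the coinvariant formula and \cite[Theorem 12]{KatzBetti01} apply directly to the sheaf on $G^{r-1}\subset\A^{(r-1)n^2}$, with constants depending only on $n$, $r$ and the degrees, which is what the paper does.
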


For the sake of clarity, we focus on proving the two-dimensional versions, and indicate the changes necessary for Theorem \ref{thm:unifdisppGLSLr} at the end.

\begin{acknowledgements}
  The author thanks Lucile Devin, his colleagues in Montréal, and anonymous referees for useful feedback on this work, as well as Yan Li for suggesting a modification of the proof of Proposition \ref{prop:constantGLSL} that makes it shorter and able to handle characteristic $2$. This work was partially supported by Koukoulopoulos' Discovery Grant 435272-2013 of the Natural Sciences and Engineering Research Council of Canada, and by Radziwiłł's NSERC DG grant and the CRC program.
\end{acknowledgements}
\section{Tools}

\subsection{Equidistribution and discrepancy}

For the following results, we refer the reader to \cite[Chapter 1]{DT97}. Throughout, we let $\Omega=[0,1]^k$ for some integer $k\ge 1$.

\begin{definition}\label{def:discrep}
  The discrepancy of a sequence $(\bs x_n)_{n\ge 1}$ in $\Omega$ is
  \begin{eqnarray*}
    D_N(\bs x_n)&=&\sup_{I\subset\Omega} \left|\frac{|\{n\le N : \bs x_n\in I\}|}{N}-\meas(I)\right|,
  \end{eqnarray*}
  where $I$ runs over all products of intervals in $\Omega$.
\end{definition}

\begin{proposition}\label{prop:discrepancy}
  A sequence $(\bs x_n)_{n\ge 1}$ in $\Omega$
is uniformly distributed if and only if $D_N(\bs x_n)=o(1)$, and we have the \emph{Erd\H{o}s--Tur\'an--Koksma inequality}: for any integer $T\ge 1$
\[D_N(\bs x_n)\le \left(\frac{3}{2}\right)^k \left(\frac{2}{T+1}+\sum_{\substack{\bs h\in\Z^k\\0<||\bs h||_\infty\le T}}\frac{1}{r(\bs h)}\left|\frac{1}{N}\sum_{n\le N}e(\bs h\cdot \bs x_n)\right|\right),\]
where $r(\bs h)=\prod_{i=1}^k\max(1,|h_i|)$, $e(z)=\exp(2\pi iz)$.
\end{proposition}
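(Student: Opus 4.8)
The plan is to prove the quantitative Erd\H{o}s--Tur\'an--Koksma inequality first, and then read off the qualitative equivalence from it together with the elementary half of Weyl's criterion.

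For the equivalence: if $D_N(\bs x_n)=o(1)$, then for any fixed product of intervals $I$ the number $|\{n\le N:\bs x_n\in I\}|/N-\meas(I)$ has absolute value at most $D_N(\bs x_n)$ and hence tends to $0$, which is uniform distribution. Conversely, suppose $(\bs x_n)$ is uniformly distributed. By linearity, $\frac{1}{N}\sum_{n\le N}f(\bs x_n)\to\int_\Omega f$ for every finite linear combination $f$ of indicators of products of intervals, hence for every Riemann-integrable $f$ by sandwiching $f$ between two such step functions of arbitrarily close integrals; applying this to the real and imaginary parts of $\bs x\mapsto e(\bs h\cdot\bs x)$ shows that the Weyl sums $W_N(\bs h):=\frac{1}{N}\sum_{n\le N}e(\bs h\cdot\bs x_n)$ tend to $0$ for each fixed $\bs h\in\Z^k\setminus\{0\}$. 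Granting the inequality, I would then fix an integer $T\ge1$: since $\{\bs h:0<\|\bs h\|_\infty\le T\}$ is finite, $\sum_{0<\|\bs h\|_\infty\le T}\frac{1}{r(\bs h)}|W_N(\bs h)|\to0$ as $N\to\infty$, so $\limsup_N D_N(\bs x_n)\le(3/2)^k\frac{2}{T+1}$; letting $T\to\infty$ gives $D_N(\bs x_n)=o(1)$.

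For the inequality itself, fix a product of intervals $I=\prod_{i=1}^k J_i\subseteq\Omega$ and an integer $T\ge1$. The engine is the one-dimensional Beurling--Selberg extremal approximation: for every interval $J$ there are trigonometric polynomials $S_J^-\le\mathbf{1}_J\le S_J^+$ of degree at most $T$, with $S_J^+\ge0$, with $\widehat{S_J^\pm}(0)=|J|\pm\frac{1}{T+1}$, and with $|\widehat{S_J^\pm}(h)|\le\frac{1}{T+1}+\frac{1}{\pi|h|}$ for $h\ne0$. These come from a degree-$\le T$ trigonometric-polynomial approximation to the $1$-periodic sawtooth $\psi(x)=\{x\}-\frac{1}{2}$ whose pointwise error is dominated by a nonnegative kernel of total mass $\frac{1}{T+1}$, via the identity $\mathbf{1}_{[\alpha,\beta)}(x)=(\beta-\alpha)+\psi(x-\beta)-\psi(x-\alpha)$ (valid away from the endpoints); this is the one genuinely analytic ingredient, for which I would cite Vaaler or Montgomery. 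From these I would assemble trigonometric polynomials $B^-\le\mathbf{1}_I\le B^+$ on $\Omega$ of degree $\le T$ in each variable. For the majorant I take $B^+(\bs x)=\prod_{i=1}^k S_{J_i}^+(x_i)$, which works because each factor is $\ge0$ and $\ge\mathbf{1}_{J_i}$. For the minorant I use the elementary inequality $\prod_i v_i\ge\prod_i u_i-\sum_i(u_i-w_i)\prod_{j\ne i}u_j$, valid whenever $w_i\le v_i$ and $0\le v_i\le u_i$ for all $i$ (proved by telescoping $\prod u_i-\prod v_i$ and bounding each resulting factor), applied with $v_i=\mathbf{1}_{J_i}(x_i)$, $u_i=S_{J_i}^+(x_i)$, $w_i=S_{J_i}^-(x_i)$; this produces the trigonometric minorant $B^-(\bs x)=\prod_iS_{J_i}^+(x_i)-\sum_i\bigl(S_{J_i}^+(x_i)-S_{J_i}^-(x_i)\bigr)\prod_{j\ne i}S_{J_j}^+(x_j)$, whose frequencies again lie in $[-T,T]^k$. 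Then, averaging $B^-\le\mathbf{1}_I\le B^+$ over $n\le N$ and expanding into Fourier series (so that $\widehat{B^+}(\bs h)=\prod_i\widehat{S_{J_i}^+}(h_i)$, with $\widehat{B^-}(\bs h)$ the corresponding signed combination), I would separate off the frequency $\bs h=0$ to obtain that $\bigl|\frac{1}{N}\sum_{n\le N}\mathbf{1}_I(\bs x_n)-\meas(I)\bigr|$ is at most $\max\bigl(\widehat{B^+}(0)-\meas(I),\ \meas(I)-\widehat{B^-}(0)\bigr)+\sum_{0<\|\bs h\|_\infty\le T}\max\bigl(|\widehat{B^+}(\bs h)|,|\widehat{B^-}(\bs h)|\bigr)|W_N(\bs h)|$, and taking the supremum over all products of intervals $I$ bounds $D_N(\bs x_n)$.

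The remaining task, and the step I expect to require the most care, is the purely combinatorial accounting needed to bound this last expression by $(3/2)^k\bigl(\frac{2}{T+1}+\sum_{0<\|\bs h\|_\infty\le T}\frac{1}{r(\bs h)}|W_N(\bs h)|\bigr)$. One groups the frequencies by their support $\{i:h_i\ne0\}$, bounds each factor with $h_i=0$ by $|J_i|+\frac{1}{T+1}\le\frac{3}{2}$ and each factor with $h_i\ne0$ by $\frac{1}{T+1}+\frac{1}{\pi|h_i|}$, so that after using $\frac{1}{\pi}<\frac{1}{2}$ and absorbing the $\frac{1}{T+1}$ shifts the off-diagonal sum reproduces the weight $\frac{1}{r(\bs h)}=\prod_i\max(1,|h_i|)^{-1}$, and one checks that the diagonal discrepancy $\max(\widehat{B^+}(0)-\meas(I),\meas(I)-\widehat{B^-}(0))$ is absorbed into $(3/2)^k\frac{2}{T+1}$. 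The subtlety here is that the crudest product estimates lose a factor growing with $k$, so the contributions of $\bs h=0$ and of the nonzero frequencies must be combined judiciously rather than bounded in isolation; I would follow the treatment in \cite[Chapter 1]{DT97} for this bookkeeping. Apart from it, everything reduces to the one-dimensional Beurling--Selberg construction quoted above.
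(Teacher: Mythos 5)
The paper does not actually prove this proposition: it is quoted from Drmota--Tichy and the ``proof'' is just the citation \cite[Theorems 1.6 and 1.21]{DT97}. Your proposal reconstructs the standard argument behind that citation, and its architecture is sound: the forward implication of the equivalence is immediate from the definition of $D_N$; the converse follows from the Erd\H{o}s--Tur\'an--Koksma inequality combined with the Weyl-criterion argument that uniform distribution forces each fixed Weyl sum to be $o(N)$ (and this is not circular, since the inequality is proved independently); and the inequality itself rests on the one-dimensional Beurling--Selberg--Vaaler extremal polynomials, multiplied together for the majorant and corrected via the telescoping inequality for the minorant. All of these steps are correct as stated, and the minorant inequality you invoke does hold under the sign conditions you impose.

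The one place where the argument is not closed is exactly the one you flag: the bookkeeping that produces the precise constant $(3/2)^k$ together with the weight $1/r(\bs h)$. With your minorant $B^-=\prod_i S^+_{J_i}-\sum_i\bigl(S^+_{J_i}-S^-_{J_i}\bigr)\prod_{j\ne i}S^+_{J_j}$, the coefficient $\widehat{B^-}(\bs h)$ is a sum of $k+1$ products, and estimating each summand by $(3/2)^k/r(\bs h)$ loses a factor of order $k$; the clean constant requires the more careful combination carried out in \cite[Chapter 1]{DT97} (going back to Grabner), to which you defer. So, strictly, your route yields the inequality with some constant $C_k$ in place of $(3/2)^k$ unless that final step is done as in the reference. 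For every use of the proposition in this paper this is harmless, since $k=n^2$ or $2n^2$ and the constant is absorbed into the $O_n(\cdot)$ notation, but the exact constant in the statement is not a free byproduct of the naive product construction.
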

\begin{proof}
  See \cite[Theorem 1.6, Theorem 1.21]{DT97} respectively.
\end{proof}
\begin{remark}\label{rem:isotropic}
  If the sets $I$ in Definition \ref{def:discrep} are replaced by arbitrary convex subsets, this yields the isotropic discrepancy $J_N(\bs x_n)$, which satisfies $J_N(\bs x_n)\ll_k D_N(\bs x_n)^{1/k}$ (see \cite[Theorem 1.12]{DT97}).
\end{remark}

By Weyl's criterion, $(\bs x_n)_{n\ge 1}$ is equidistributed in $\Omega$ if and only if
\[\sum_{n\le N} e(\bs h\cdot \bs x_n)=o(N)\]
for every nonzero $\bs h\in\Z^k$, so the Erd\H{o}s--Tur\'an--Koksma inequality quantifies the equidistribution from the rate of decay of these exponential sums.

\subsection{Exponential sums on matrix groups}

The bounds of \cite{FHLOS10} used in \cite{HuLi13} proceed by reducing to classical Kloosterman sums on $\F_p$, through averaging and interchanging summations. Instead, we use Deligne's extension of his proof of the Weil conjectures \cite{Del2} to work directly with the sums over the matrix groups. This allows a precise control of when the sums exhibit cancellation.

\begin{proposition}\label{prop:expSumG}
  Let $G$ be as in Theorem \ref{thm:distrElements}, let $f\in\F_p(G)$ be a rational function on $G$, let $\psi: \F_p\to\C^\times$ be a nontrivial character, let $\chi:\F_p^\times\to\C^\times$ be a multiplicative character, and let $f_1\in\F_p[G]^\times$ be a nonvanishing nonconstant function. Then
  \begin{equation}
    \label{eq:expSumG}
    \frac{1}{|G(\F_p)|}\sum_{\substack{g\in G(\F_p)\\ f(g)\neq\infty}} \psi(f(g))\chi(f_1(g))=\delta + O\left(p^{-1/2}\right)
  \end{equation}
  with $\delta=1$ if $f$ and $\chi\circ f_1$ are constant on $\{g\in G(\F_p) : f(g)\neq\infty\}$, $\delta=0$ otherwise. The implied constant depends only on $n$, $\deg(f)$ and $\deg(f_1)$.
\end{proposition}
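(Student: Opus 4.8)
The plan is to interpret the exponential sum in \eqref{eq:expSumG} as a sum of trace functions of a constructible $\overline{\Q}_\ell$-sheaf on $G$, and then invoke Deligne's extension of the Weil conjectures together with the Fouvry–Katz–Laumon stratification to control the weights. Concretely, let $\mathcal{L}_\psi$ be the Artin–Schreier sheaf on $\A^1$ (so that its trace function at $t\in\F_p$ is $\psi(t)$) and $\mathcal{L}_\chi$ the Kummer sheaf on $\G_m$ (trace function $\chi$). Pulling back along $f\colon G\dashrightarrow \A^1$ and $f_1\colon G\to\G_m$ and tensoring, we obtain a lisse sheaf $\mathcal{F}=f^*\mathcal{L}_\psi\otimes f_1^*\mathcal{L}_\chi$ on the open dense subset $U\subseteq G$ where $f$ is defined (note $f_1$ is everywhere nonvanishing, so its pullback is lisse on all of $G$), with $\mathcal{F}$ punctually pure of weight $0$ and of rank one. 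The sum in question is then, up to the normalization $1/|G(\F_p)|$, equal to $\sum_{g\in U(\F_p)}\operatorname{tr}(\operatorname{Frob}_g\mid\mathcal{F})$, and the Grothendieck–Lefschetz trace formula rewrites this as $\sum_i (-1)^i \operatorname{tr}(\operatorname{Frob}_p\mid H^i_c(U_{\overline{\F}_p},\mathcal{F}))$.

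The key input is that $G$ is a smooth, geometrically connected affine variety of dimension $d=\dim G$ (for the groups in the statement, over $\F_p$ with $p$ large this holds, with care for small characteristic in the orthogonal case), so $U$ is as well; by Artin's vanishing theorem, $H^i_c(U,\mathcal{F})=0$ for $i>2d$, and the top cohomology $H^{2d}_c(U,\mathcal{F})$ is nonzero (and equal to the space of coinvariants, pure of weight $2d$) precisely when $\mathcal{F}$ is geometrically trivial, i.e. when $f$ and $\chi\circ f_1$ are constant on $U(\overline{\F}_p)$ — this produces the main term $\delta$. In that trivial case one checks $\mathcal{F}\cong\overline{\Q}_\ell$ on $U$ and $H^{2d}_c$ contributes $p^d$, matching $|G(\F_p)|\sim p^d$ and giving $\delta=1$ with the stated error; in the non-trivial case $H^{2d}_c=0$. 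The remaining contributions come from $H^i_c$ for $i\le 2d-1$, each of which by Deligne has weight $\le i\le 2d-1$, so each eigenvalue is $\ll p^{(2d-1)/2}$; dividing by $|G(\F_p)|\sim p^d$ gives $O(p^{-1/2})$ per eigenvalue. To make the implied constant depend only on $n$, $\deg f$, $\deg f_1$, one needs a uniform bound on $\sum_i \dim H^i_c(U,\mathcal{F})$, which is exactly what the Fouvry–Katz–Laumon stratification theorem (or Bombieri–Katz bounds on Betti numbers in terms of the number and degrees of defining equations) provides: the total Betti number is bounded polynomially in the degrees of the equations cutting out $G$ and the degrees of $f,f_1$, all of which are controlled by $n,\deg f,\deg f_1$.

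The main obstacle is the clean identification of the top cohomology and the sharp characterization of when $\delta=1$ — that is, proving that $\mathcal{F}$ being geometrically constant is equivalent to $f$ \emph{and} $\chi\circ f_1$ both being constant on $U(\overline{\F}_p)$, not merely on the $\F_p$-points. For the Artin–Schreier factor, $f^*\mathcal{L}_\psi$ is geometrically trivial iff $f=h^p-h$ for some $h\in\overline{\F}_p(G)$, and since $\overline{\F}_p[U]^\times$ modulo constants is torsion-free (Rosenlicht, as cited in the paper) while $f$ is regular, this forces $f$ itself to be constant; for the Kummer factor, $f_1^*\mathcal{L}_\chi$ is geometrically trivial iff $f_1$ is an $m$-th power times a constant in $\overline{\F}_p[U]^\times$ where $m=\operatorname{ord}(\chi)$, and again the structure of units on $G$ (one-dimensional characters only) pins this down to $\chi\circ f_1$ constant. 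One must also handle the possibility of cancellation \emph{between} the two factors, which cannot occur because $\mathcal{L}_\psi$ is wildly ramified at infinity while $\mathcal{L}_\chi$ is tame, so their pullbacks cannot cancel unless each is separately trivial. Assembling these ramification-theoretic facts carefully, and transporting them through the (possibly characteristic $2$) orthogonal case, is where the real work lies; the cohomological bookkeeping afterwards is routine.
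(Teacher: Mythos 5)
Your proposal follows essentially the same route as the paper: pull back the Artin--Schreier and Kummer sheaves along $f$ and $f_1$, apply the Grothendieck--Lefschetz trace formula and Deligne's weight bounds, kill the top compactly supported cohomology via the coinvariant description unless the sheaf is geometrically trivial, bound the sum of Betti numbers by Katz-type results, and use the tame/wild dichotomy to rule out cancellation between the two factors. One small correction: geometric triviality of $f^*\mathcal{L}_\psi$ only forces $f=h^p-h+c$, hence $f$ constant on the $\F_p$-points of $U$ (which is all the statement requires), not constant as a rational function --- compare the paper's remark about $\det^p-\det$.
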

\begin{proof}
  The result is obvious if $f$ and $\chi\circ f_1$ are constant on $G(\F_p)$, so we may assume it is not the case and prove \eqref{eq:expSumG} with $\delta=0$. Let $\ell\neq p$ be an auxiliary prime. Following \cite[Exposé 6]{DelEC}, let $\Lc_0:=f^*\Lc_{\psi}=\Lc_{\psi(f)}$ (resp. $\Lc_1:=f_1^*\Lc_\chi=\Lc_{\chi(f_1)}$) be the restriction to $G$ of the Artin--Schreier (resp. Kummer) sheaf on $\A^{n^2}_{\F_p}$ corresponding to $\psi\circ f$ (resp. $\chi\circ f_1$), and let $\Lc=\Lc_0\otimes\Lc_1$ be the middle tensor product. These can be seen as representations
  \[\rho_0, \ \rho_1, \ \rho=\rho_0\otimes\rho_1: \Gal(\F_p(G)^\sep/\F_p(G))\to\overline\Q_\ell^\times,\]
  such that at every point $g\in G(\F_p)\subset \F_p^{n^2}$ with $f_1(g)\neq \infty$, there is a Frobenius element $\Frob_g$ with
  \[\iota\rho_0(\Frob_g)=\psi(f(g)), \ \iota\rho_1(\Frob_g)=\chi(f_1(g)), \ \iota\rho(\Frob_g)=\psi(f(g))\chi(f_1(g))\]
  for an embedding $\iota:\overline\Q_\ell\to\C$. Hence, the left-hand side of \eqref{eq:expSumG} is
  \[\frac{1}{|G(\F_p)|}\sum_{\substack {g\in G(\F_p) \\ f(g)\neq \infty}} \iota\rho(\Frob_g).\]
  
  By the Grothendieck--Lefschetz trace formula \cite[Exposé 6, (1.1.1)]{DelEC}, this is
  \[\frac{1}{|G(\F_p)|}\sum_{i=0}^{2\dim G}(-1)^i\iota\tr(\Frob_p \mid H^{i}_c(U\times\overline{\F}_p, \Lc)),\]
  for $U$ the open in $\A^{n^2}_{\F_p}$ where $\Lc_0$ is lisse (i.e. the complement of the zero set of $f$).

  By Deligne's extension of the Riemann hypothesis over finite fields \cite[Théorème 2]{Del2} (see also \cite[Théorème 1.17]{DelEC}), the eigenvalues of the Frobenius acting on $H^{i}_c(U\times\overline{\F}_p, \Lc)$ are $p$-Weil numbers of weight at most $i$. If the one-dimensional sheaf $\Lc$ is not geometrically trivial, the coinvariant formula implies that $H^{2\dim G}_c(U\times\overline{\F}_p, \Lc)=0$, so that the left-hand side of \eqref{eq:expSumG}
  \begin{eqnarray}
    \label{eq:sumBetti}
    \ll p^{-1/2}\sum_{i=0}^{2\dim G-1}\dim H^i_c(U\times\overline{\F}_p,\Lc).
  \end{eqnarray}
  By \cite[Theorem 12]{KatzBetti01}, the sum of Betti numbers in the error term is bounded by a quantity depending only on $n$, $\deg(f)$ and $\deg(f_1)$, for example
  \begin{equation}
    \label{eq:sumBetti2}
    3 \left(2+\max(\deg(f), n+2)+\deg(f_1)\right)^{3n^2}.
  \end{equation}
  
  Thus, it suffices to show that $\Lc$ is not geometrically trivial to conclude. If it is not the case, since $\Lc_1$ is tame everywhere and $\Lc_0$ is not unless it is geometrically trivial, we then have that both $\Lc_1$ and $\Lc_0$ are geometrically trivial. Since $\pi_1(U,\overline\eta)/\pi_1(\overline U,\overline\eta)\cong\Gal(\overline\F_q/\F_q)$, it follows as in \cite[Proposition 8.5]{FKM15} that $\psi\circ f$ and $\chi\circ f_1$ are constant on $U(\F_p)$. The former implies that $f$ is of the form $f_2^p-f_2+c$ for some $c\in\F_p^\times$ and $f_2\in\F_p(G)$, whence $f$ is constant on $U(\F_p)$ as well.
\end{proof}
\begin{remark}
  The function $f=\det^p-\det$ is not a counterexample to the theorem since, while not constant on $\GL_n(\overline\F_p)$, it is constant on $\GL_n(\F_p)$. Alternatively, note that the implied constant in \eqref{eq:expSumG} depends on $\deg(f)=p$. In the following, we will always consider cases where $\deg(f),\deg(f_1)$ are independent from $p$. Similarly, $f_1=\det^{\ord\chi}$ is not a counterexample since $\chi\circ f_1$ is constant on $G(\F_p)$.
\end{remark}

\subsubsection{Improved error terms via stratification}

The anonymous referee of Hu and Li's paper indicated (see \cite[Section 4]{HuLi13}) that the stratification results of Laumon, Katz and Fouvry may be employed to answer the conjecture for $\SL_n$ ($n\ge 3$; see Section \ref{subsec:SL}). This is not necessary to obtain uniform distribution (Proposition \ref{prop:expSumG} suffices), but we can indeed use the powerful results of Fouvry--Katz \cite{FK01} to improve the error terms.

\begin{definition}
  We consider the inner product on $M_n(\F_p)$ given by
\[g_1\cdot g_2:=\tr(g_1^tg_2)=\sum_{1\le i,j\le n}(g_1)_{i,j}(g_2)_{i,j} \hspace{0.5cm} (g_1,g_2\in M_n(\F_p)).\]  
\end{definition}

The following provides a better bound on average over shifts. We will see in Section \ref{sec:proofs} that these types of sums precisely arise when bounding discrepancies of the sequences we consider.
\begin{proposition}\label{prop:stratification}
  Under the hypotheses and notations of Proposition \ref{prop:expSumG}, assume that $f$ is obtained by reduction of a morphism of $\Z$-schemes $\hat f: G\to\A^1_\Z$. If $\delta=0$, then, for every integer $2\le T<p$,
  \[\sum_{\substack{h\in M_n(\Z)\\||h||_\infty\le T}}\frac{1}{r(h)}\left|\frac{1}{|G(\F_p)|}\sum_{g\in G(\F_p)} \psi(f(g)+h\cdot g)\chi(f_1(g))\right|\ll \frac{(\log{T})^{n^2-\dim G+1}}{p^{1/2}},\]
  where the implied constant depends only on $n$ and $\deg(\hat f)$.
\end{proposition}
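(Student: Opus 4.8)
The plan is to apply the Fouvry--Katz stratification theorem (from \cite{FK01}) to the family of exponential sums $S(h)=\frac{1}{|G(\F_p)|}\sum_{g\in G(\F_p)}\psi(f(g)+h\cdot g)\chi(f_1(g))$ parametrized by $h\in M_n(\F_p)\cong\A^{n^2}$, and then to combine the resulting codimension estimates with the weight $1/r(h)$ and a dyadic decomposition. The point of \cite{FK01} is that there is a filtration of $\A^{n^2}$ by closed subschemes $\A^{n^2}=X_0\supseteq X_1\supseteq\cdots$, with $\dim X_j\le n^2-j$, defined over $\Z$ and with degrees bounded only in terms of $n$ and $\deg(\hat f)$, such that for $h\notin X_j(\F_p)$ one has the square-root-type bound $|S(h)|\ll_{n,\deg\hat f} p^{-j/2}$ — more precisely $|S(h)|\ll p^{(\dim G - j)/2 - \dim G}\cdot(\text{Betti bound})$, i.e. a gain of $p^{-j/2}$ over the trivial bound beyond the generic $p^{-\dim G/2}$ that already holds off a positive-codimension locus. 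Here the hypothesis that $f$, hence the whole family $f(g)+h\cdot g$, comes from a $\Z$-morphism is exactly what lets us invoke the uniform-in-$p$ form of the stratification; the $\chi(f_1(g))$ twist is incorporated by working with the sheaf $\Lc_1=\Lc_{\chi(f_1)}$ tensored against the Artin--Schreier sheaf $\Lc_{\psi(f(g)+h\cdot g)}$, exactly as in Proposition~\ref{prop:expSumG}, and the $\delta=0$ assumption guarantees that for $h$ outside a proper closed subset the relevant sheaf on $G$ is not geometrically trivial, so the top cohomology vanishes and the generic bound holds.

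Granting this, I would split the sum over $\|h\|_\infty\le T$ according to which stratum $h$ lands in: write $Z_j=X_j(\F_p)\setminus X_{j+1}(\F_p)$, so that on $Z_j$ we have $|S(h)|\ll p^{-j/2}/p^{?}$... more cleanly, on $Z_j$ (for $0\le j\le \dim G$, say, with $S(h)$ trivially bounded once $j$ exceeds what the method gives) one has $|S(h)|\ll_{n,\deg\hat f} p^{-j/2}$ after normalizing by $|G(\F_p)|$; and by the affine Bézout-type estimate in \cite{FK01}, the number of $h\in\Z^{n^2}$ with $\|h\|_\infty\le T$ lying on a $\Z$-scheme of dimension $\le n^2-j$ and bounded degree is $\ll_{n,\deg\hat f} T^{n^2-j}$. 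The crucial extra input, and the source of the logarithmic rather than polynomial-in-$T$ factor, is to estimate $\sum_{\|h\|_\infty\le T}1/r(h)$ restricted to such a subvariety. The bound $\sum_{0<\|h\|_\infty\le T}1/r(h)\ll(\log T)^{n^2}$ is classical for the full box; intersecting with a subvariety of codimension $j$ one expects to save a factor $(\log T)^{j-1}$ or so, giving $\sum_{h\in Z_j,\,\|h\|_\infty\le T}1/r(h)\ll(\log T)^{n^2-j}$ — but one must be careful because the variety need not be coordinate-aligned. A safe way: bound $1/r(h)$ trivially by $1$ and use the count $T^{n^2-j}$, or more sharply observe that $r(h)\gg$ (product of the nonzero coordinates) and the subvariety, being defined by $\ge j$ independent equations, forces at least $j$ coordinates to be "determined", letting one sum $1/r$ over the free coordinates and pick up only $(\log T)^{n^2-j}$; I would handle the $j=0,1$ strata (where the variety is all of $\A^{n^2}$ or a hypersurface) separately since those contribute the dominant $(\log T)^{n^2-\dim G+1}/p^{\dim G/2}$-type terms, and for these the sheaf-theoretic input gives the full generic gain $p^{-(\dim G)/2}$ already at $j=1$ up to the boundary, or one tracks the stratification more carefully so that the $p^{-j/2}$ gains keep pace with the growth $T^{n^2-j}$. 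Summing the dyadic pieces $T\in\{1,2,4,\dots\}$ then telescopes, the geometric series in $p^{-j/2}$ being dominated by its first term.

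The main obstacle I anticipate is making the interplay between the three quantities — the cohomological gain $p^{-j/2}$ on the $j$-th stratum, the counting bound $T^{n^2-j}$ for lattice points on that stratum, and the weighted sum $\sum 1/r(h)$ over those points — balance so as to produce exactly the exponent $n^2-\dim G+1$ on $\log T$ and $p^{\dim G/2}$ in the denominator, rather than something weaker. Getting the $\dim G$ (as opposed to just $1$ or $n^2$) into the denominator requires that the \emph{generic} bound for $|S(h)|$ already be $p^{-\dim G/2}$ — this is where one uses that $G$ is a connected smooth affine variety of dimension $\dim G$ and that for generic $h$ the sheaf $\Lc$ on $G$ has vanishing $H^0_c$ and $H^{2\dim G}_c$ and no other "big" cohomology, so that Deligne's bound gives $|H^i_c|$-controlled sums of size $p^{i/2}$ with $i\le 2\dim G - 1$, and the stratification improves this to $i\le 2\dim G - 1 - j$ off $X_j$. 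Verifying that $f_1$'s Kummer twist does not spoil the codimension bookkeeping — i.e. that the locus where $\Lc_0\otimes\Lc_1$ becomes geometrically trivial, as $h$ varies, is itself a $\Z$-subscheme of controlled degree and positive codimension when $\delta=0$ — is the technical heart; once that is in place, the rest is the routine dyadic summation sketched above. I expect to cite \cite{FK01} for both the cohomological stratification and the lattice-point count, and to reserve the genuinely new work for checking the hypotheses of that theorem apply to $(f(g)+h\cdot g,\ \chi\circ f_1)$ on $G$ uniformly in $p$.
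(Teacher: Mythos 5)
Your overall strategy -- the Fouvry--Katz stratification $X_{n^2}\subset\dots\subset X_1\subset X_0=\A^{n^2}$, the weighted lattice-point count $\sum_{h\in X_j(\F_p),\,\|h\|_\infty\le T}1/r(h)\ll(\log T)^{\dim X_j}$, and a sum over strata -- is the same as the paper's, and you correctly isolate the weighted count as the source of the logarithmic saving. But the quantitative input you feed into this scheme is wrong in a way that derails the bookkeeping. The stratification theorem gives, for $h\notin X_j(\F_p)$, the bound $|S(h)|\ll p^{(j-1)/2}/|G(\F_p)|^{1/2}\approx p^{(j-1-\dim G)/2}$: the estimate \emph{deteriorates} as $j$ grows, because lying off a smaller closed set $X_j$ is a weaker hypothesis. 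You instead assert $|S(h)|\ll p^{-\dim G/2-j/2}$ off $X_j$, i.e.\ a bound that \emph{improves} with depth and is better than square-root cancellation -- this is not what \cite{FK01} proves and cannot hold in general. As a consequence you locate the dominant contribution in the wrong place: with the correct bound, the stratum $X_j\setminus X_{j+1}$ contributes $\ll p^{(j-\dim G)/2}(\log T)^{n^2-j}$, which is \emph{increasing} in $j$ for $T<p$, so the main term comes from $j=\dim G-1$ and equals $p^{-1/2}(\log T)^{n^2-\dim G+1}$ -- not from $j=0,1$, and not with $p^{\dim G/2}$ in the denominator as you state twice (the proposition claims only $p^{1/2}$).

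Two further gaps. First, for the deep strata $j\ge\dim G$ the stratification bound is worse than $p^{-1/2}$ and your fallback of bounding $S(h)$ ``trivially'' gives $|S(h)|\ll 1$, hence a contribution $(\log T)^{n^2-\dim G}$ with no power of $p$ saved, which swamps the claimed estimate. The paper instead applies Proposition \ref{prop:expSumG} (Deligne) to get the uniform bound $O(p^{-1/2})$ for every individual $h$ in $X_{\dim G}(\F_p)$; this is precisely where the hypothesis $\delta=0$ is used, and it is indispensable. Second, your ``safe way'' for the weighted count -- replacing $1/r(h)$ by $1$ and using the point count $T^{n^2-j}$ -- is polynomial in $T$ and useless here; the correct statement $(\log T)^{n^2-j}$ is proved not by the heuristic that $j$ coordinates are ``determined'' but by induction on dimension via hyperplane sections, as in \cite[Lemma 9.5]{FK01} and \cite[Lemma 1.7]{Xu18}. (The dyadic decomposition you invoke is unnecessary: once the strata are weighted correctly, one sums directly and the geometric series in $\sqrt{p}/\log T$ is dominated by its top term.)
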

\begin{proof}
  
  By \cite[Theorem 1.1, Section 3]{FK01}, there exist closed subschemes $X_j\subset\A^{n^2}_\Z$ ($0\le j\le n^2$) of relative dimension $\le n^2-j$, depending on $G_\Z$ and $\hat f$, such that
  \[X_{n^2}\subset X_{n^2-1}\subset\dots\subset X_1\subset X_0:=\A^{n^2}_\Z\]
  and
  \begin{equation}
    \label{eq:FK}
    \frac{1}{|G(\F_p)|}\sum_{g\in G(\F_p)} \psi(f(g)+h\cdot g)\chi(f_1(g))\ll \frac{p^{\frac{j-1}{2}}}{|G(\F_p)|^{1/2}}
  \end{equation}
  if $h\in M_n(\F_p)\backslash X_j(\F_p)$, identifying $M_n(\F_p)$ with $\F_p^{n^2}$ (in the case of $G=\GL_n$, the ambient space is $\A^{n^2+1}_\Z$, with an additional coordinate for $1/\det$, and one replaces the $X_j$, $0\le j\le n^2+1$, given by ibid. with their projections to the first $n^2$ coordinates).

  According to the second-to-last line of the proof of \cite[Theorem 3.1]{FK01} (from which Theorem 1.1 in ibid. follows), the implied constant in \eqref{eq:FK} is bounded by the sum of Betti numbers appearing in \eqref{eq:sumBetti} above, bounded by \eqref{eq:sumBetti2}, which only depends on $n$ and on the degree of $\hat f$ (alternatively, one may also see \cite[(3.1.2), (3.4.2)]{KatzLa85}, which controls the dependency on $\hat f$ of the implied constant in \cite[Theorem 3.1, Theorem 2.1]{FK01}).

  If $\delta=0$, we get by Proposition \ref{prop:expSumG} and \eqref{eq:FK} that
    \begin{eqnarray}
      \label{eq:FK2}
      &&\sum_{\substack{h\in\ M_n(\Z)\\||h||_\infty\le T}}\frac{1}{r(h)}\left|\frac{1}{|G(\F_p)|}\sum_{g\in G(\F_p)} \psi(f(g)+h\cdot g)\chi(f_1(g))\right|\\
      &\ll&\sum_{j=0}^{d-1}\frac{p^{\frac{j}{2}}}{|G(\F_p)|^{1/2}}\sum_{\substack{h\in M_n(\Z)\\||h||_\infty\le T}} \frac{\delta_{h\in X_j(\F_p)\backslash X_{j+1}(\F_p)}}{r(h)}+ \frac{1}{p^{1/2}}\sum_{\substack{h\in M_n(\Z)\\||h||_\infty\le T}} \frac{\delta_{h\in X_d(\F_p)}}{r(h)},\nonumber
    \end{eqnarray}
    where $d=\dim G$. By induction as in \cite[Lemma 9.5]{FK01} and \cite[Lemma 1.7]{Xu18}, we get that
    \[\sum_{\substack{h\in M_n(\Z)\\ ||h||_\infty\le T}}\frac{\delta_{h \, (\text{mod }p)\in X_j(\F_p)}}{r(h)}\ll (\log{T})^{\dim X_j}.\]
    Therefore, \eqref{eq:FK2} is
    \begin{eqnarray*}     
      &\ll&\sum_{j=0}^{d-1}\frac{p^{\frac{j}{2}}}{|G(\F_p)|^{1/2}}(\log{T})^{n^2-j}+ \frac{1}{p^{1/2}}(\log{T})^{n^2-d}\\
      &=&(\log{T})^{n^2}\left(\frac{1}{|G(\F_p)|^{1/2}}\sum_{j=0}^{d-1}\left(\frac{\sqrt{p}}{\log{T}}\right)^{j}+ \frac{1}{p^{1/2}(\log{T})^d}\right),
    \end{eqnarray*}
    where the implied constants depend only on $n$ and $\deg(\hat f)$.
\end{proof}
\begin{remark}\label{rem:FKchi}
  To handle normal subgroups $H\le G(\F_p)$ as suggested in Remark \ref{ref:nonvanishingf}, we would need to replace $\chi\circ f_1$ by a character $\chi$ of $G(\F_p)$ (or of $G(\F_p)/H$). To do so, one would consider the Lang torsor $\Lc_1$ corresponding to $\chi$ as in \cite[1.22-25]{DelEC}. Since all centralizers in $\GL_n$ are connected, ibidem shows that the trace function associated to $\Lc_1$ yields the character $\chi$. One could then proceed as in the proofs of \cite[Corollary 3.2, Theorem 1.1]{FK01}.
\end{remark}
\begin{remark}
  Under the non-vanishing of an ``$A$-number'', \cite[Theorem 1.2]{FK01} shows that the exponent in \eqref{eq:FK} can be improved to $\max(0,j/2-1)$, giving a nontrivial bound whenever $j<d+2$. This would be nontrivial for all $j$ with $G=\SL_n$ as well. However, we cannot use \cite[Theorem 8.1]{FK01} to show the non-vanishing, since $|G(\F_p)|\equiv 0\pmod{p}$ (see \cite[Chapter 3]{Wil09}).
\end{remark}

\section{Proofs of Theorems \ref{thm:unifdisppGLSL}, \ref{thm:GLdet}, \ref{thm:distrElements} and \ref{thm:distrElementsf}}
\label{sec:proofs}
\subsection{Setup of the exponential sums}
To obtain the theorems from Proposition \ref{prop:discrepancy}, we need to bound sums of the form
\begin{equation}
  \label{eq:sums}
  \frac{1}{|H|}\sum_{g\in H}\psi\left(h_1\cdot g+h_2 \cdot (g^{-1}x)\right)
\end{equation}
for $H\normal G(\F_p)$, $x\in G(\Z)$ and $h_1,h_2\in M_n(\F_p)$, where $\psi(x)=e(x/p)$. Note that in Theorems \ref{thm:unifdisppGLSL} and \ref{thm:distrElements}, we simply have $H=G(\F_p)$.

By the orthogonality relations, \eqref{eq:sums} can be written as 

\begin{eqnarray}
  &&\frac{1}{|G(\F_p)/H|}\sum_{\chi\in\widehat{G(\F_p)/H}} \overline\chi(g)\left(\frac{1}{|H|}\sum_{g\in G(\F_p)} \psi\left(h_1\cdot g+h_2 \cdot f(g)\right) \chi(g)\right)\nonumber\\
  &\ll&\sum_{\chi\in\widehat{G(\F_p)/H}}\left|\frac{1}{|G(\F_p)|}\sum_{g\in G(\F_p)} \psi\left(h_1\cdot g+h_2 \cdot f(g)\right) \chi(g)\right|\label{eq:sums2},
\end{eqnarray}
with $f(g)=g^{-1}x$.

Under the assumptions of the theorems, $G(\F_p)/H$ is either trivial or isomorphic to a quotient $\F_p^\times/U$ for a subgroup $U\le\F_p^\times$ (since $H=\ker(G\xrightarrow{f_1}\F_p^\times\to\F_p^\times/U)$), setting $U=\F_p^\times$ if $H=G(\F_p)$.  Hence, \eqref{eq:sums2} is
\begin{equation}
  \label{eq:sums3}
  \sum_{\substack{\chi\in\widehat{\F_p^\times}\\\chi\mid_U=1}}\left|\frac{1}{|G(\F_p)|}\sum_{g\in G(\F_p)} \psi\left(h_1\cdot g+h_2 \cdot f(g)\right) \chi(f_1(g))\right|.
\end{equation}

By Proposition \ref{prop:expSumG}, the inner sum is small whenever the rational function on $G$ appearing in $\psi$ is nonconstant. We determine when this is the case in the next section.

\subsection{Constant functions on $G$}

\subsubsection{The case of $\GL_n$ ($n\ge 2$) and $\SL_n$ ($n\ge 3$)}

\begin{proposition}\label{prop:constantGLSL}
  Let $G$ be as in \eqref{eq:GLSL}, let $x\in G(\F_p)$, and let $h_1,h_2\in M_n(\F_p)$. We assume that $p\ge 3$ if $n=2$. If
  \[\big(g\in G(\F_p)\big)\mapsto h_1\cdot g+h_2 \cdot (g^{-1}x)\]
  is constant, then $h_1=h_2=0$.
\end{proposition}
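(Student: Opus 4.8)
The plan is to analyze the linear functional $g \mapsto h_1 \cdot g + h_2 \cdot (g^{-1}x)$ on $G$ by observing that it extends to a regular function on the variety $G$, and that a regular function on $\GL_n$ (resp. $\SL_n$) that is constant on $\F_p$-points is constant as a function on the variety when $p$ is large --- but here we want the clean statement, so instead I would argue directly with the group structure and the standard trick of composing with one-parameter subgroups. First I would rewrite $h_2 \cdot (g^{-1}x) = \tr(h_2^t g^{-1} x) = \tr(x h_2^t g^{-1})$, so setting $h_3 = (x h_2^t)^t = h_2 x^t$ (up to transpose bookkeeping) the function becomes $\varphi(g) = \tr(h_1^t g) + \tr(h_3^t g^{-1})$ for suitable fixed $h_1, h_3 \in M_n(\F_p)$ with $h_3 = 0 \iff h_2 = 0$ (since $x$ is invertible). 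So it suffices to show: if $\varphi(g) = \tr(A g) + \tr(B g^{-1})$ is constant on $G(\F_p)$, then $A = B = 0$.

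The key step is to differentiate, i.e. to plug in $g = 1 + t N$ for $N$ in the Lie algebra and $t$ ranging over $\F_p$, or more elementarily to plug in elementary-type elements. For $\GL_n$: take $g = I + tE_{ij}$ with $i \ne j$, which lies in $\GL_n(\F_p) \cap \SL_n(\F_p)$ and has inverse $I - tE_{ij}$; then $\varphi(I + tE_{ij}) = \varphi(I) + t(A_{ji} - B_{ji})$ (reading off the $(i,j)$ entry contributions), and constancy for all $t \in \F_p$ with $p$ large forces $A_{ji} = B_{ji}$ for all $i \ne j$. Next, to separate $A$ from $B$ and to handle diagonal entries, I would use a torus element $g = \mathrm{diag}(t, 1, \dots, 1, t^{-1})$ (determinant $1$, so valid in both groups): then $\varphi(g) - \varphi(I) = A_{11}(t-1) + A_{nn}(t^{-1}-1) + B_{11}(t^{-1}-1) + B_{nn}(t-1)$, and treating this as an identity of rational functions in $t$ (valid for all $t \in \F_p^\times$, hence for $\gg p$ values, hence identically once $p$ is large relative to $n$) gives $A_{11} + B_{nn} = 0$ and $A_{nn} + B_{11} = 0$; permuting indices yields $A_{ii} + B_{jj} = 0$ whenever $i \ne j$, which for $n \ge 3$ (and $n=2$ with one more relation from a different torus direction, using $\mathrm{diag}(t, t^{-1})$ together with $g=tI$ in the $\GL_n$ case or a second conjugate torus) forces all $A_{ii} = 0$ and all $B_{jj} = 0$. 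Combined with a bit more work using elements like $g = I + tE_{ij} + sE_{kl}$ or conjugates of the above by permutation matrices, one pins down the off-diagonal entries of $A$ and $B$ separately: e.g. conjugating $I + tE_{ij}$ by a diagonal torus element scales $t$ and lets one isolate $A_{ji}$ versus $B_{ji}$.

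The cleanest route, which I would actually write up, is the one-parameter subgroup / derivative argument phrased algebraically: for any $N$ in the Lie algebra $\mathfrak{g}$ of $G$ (i.e. $N \in M_n$ for $\GL_n$, $\tr N = 0$ for $\SL_n$), the curve $t \mapsto \exp$-like element is awkward in characteristic $p$, so instead use $g = I + tN$ whenever this is invertible with $(I+tN)^{-1} = I - tN + t^2 N^2 - \cdots$; extracting the coefficient of $t$ in $\varphi(I+tN) - \varphi(I)$, which is a polynomial in $t$ of bounded degree, and using that it vanishes for all $t \in \F_p$ with $p$ large gives $\tr(AN) - \tr(BN) = 0$ for all admissible $N$. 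For $\GL_n$ this immediately gives $A = B$; for $\SL_n$ it gives $A - B \in (\mathfrak{sl}_n)^\perp = \F_p \cdot I$, i.e. $A = B + \lambda I$. Then feeding in the scaling element $g = tI$ (for $\GL_n$) handles $\lambda$ and the symmetric part directly; for $\SL_n$ one uses $g = \mathrm{diag}(t,\dots,t,t^{-(n-1)}) \cdot (\text{fixed})$ or, better, that we already know $A = B + \lambda I$ and then the coefficient of $t^2$ in $\varphi(I + tN)$ gives $\tr(B N^2) = 0$ for all trace-zero $N$, and choosing $N = E_{ij}$ ($i \ne j$) gives $\tr(B E_{ij}^2) = 0$ trivially, so one needs $N$ with $N^2 \ne 0$, e.g. $N = E_{ij} + E_{jk}$ with $i,j,k$ distinct (needs $n \ge 3$!) giving $B_{ki} = 0$, and $N = E_{ij} + E_{ji}$ giving $B_{ii} + B_{jj} = 0$ which combined with earlier relations forces $B = 0$ hence $A = 0$. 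The main obstacle, and the reason $n \ge 3$ enters for $\SL_n$, is exactly this last point: in $\SL_2$ the Lie algebra is too small --- the obstruction noted in the introduction (that $\SL_2 = \Sp_2$) manifests here as the fact that $\tr(BN^2) = 0$ for all $N \in \mathfrak{sl}_2$ does \emph{not} force $B$ proportional to $I$ --- so the argument must genuinely use a rank-$\ge 2$ nilpotent, available only when $n \ge 3$. A secondary technical point is making sure all the "polynomial in $t$ vanishing on $\F_p$ implies identically zero" steps are valid, which requires $p$ larger than the (bounded, $n$-dependent) degrees involved; since the statement is about $p \to \infty$ this is harmless, and the $p \ge 3$ hypothesis when $n = 2$ is precisely what is needed to make the $\GL_2$ case go through with the minimal number of test elements.
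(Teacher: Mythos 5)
Your core ideas overlap substantially with the paper's proof: the reduction to $x=1$ via $h_2\cdot(g^{-1}x)=(h_2x^t)\cdot g^{-1}$, the unipotent elements $I+tE_{ij}$, and above all the rank-two nilpotent $N=E_{ij}+E_{jk}$ (so that $(I+tN)^{-1}$ has a genuine $t^2E_{ik}$ term) — this is exactly the paper's key move for $n\ge 3$, and your explanation of why $\SL_2$ must be excluded is the right one. However, the route you say you would actually write up has a genuine gap in the $\SL_n$ case: for $N$ trace-zero but not nilpotent, $I+tN$ does \emph{not} lie in $\SL_n(\F_p)$, since $\det(I+tN)=1+t\tr(N)+O(t^2)$ and the higher-order terms need not vanish. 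Concretely, your test element $N=E_{ij}+E_{ji}$ gives $\det(I+tN)=1-t^2$, so the relation $\tr(BN^2)=0$, i.e. $B_{ii}+B_{jj}=0$, is not established; likewise $N=E_{ii}-E_{jj}$ is unavailable, so the claim that $\tr((A-B)N)=0$ for \emph{all} trace-zero $N$ (hence $A=B+\lambda I$) is not justified — the admissible unipotent elements only yield the off-diagonal relations $A_{ji}=B_{ji}$. The remark that conjugating $I+tE_{ij}$ by a torus element "isolates $A_{ji}$ versus $B_{ji}$" is also vacuous: conjugation merely rescales $t$ and reproduces the same single relation.

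The gap is repairable from pieces you already mention: $\diag(t,1,\dots,1,t^{-1})$ gives $A_{ii}+B_{jj}=0$ for $i\ne j$, which for $n\ge3$ forces all $A_{ii}=\alpha$ and all $B_{jj}=-\alpha$, and then $\diag(t,\dots,t,t^{-(n-1)})$ kills $\alpha$ via the coefficient of $t^{-(n-1)}$ (provided $p$ is large enough relative to $n$ that the exponents $\pm1,\pm(n-1)$ are distinct and nonzero modulo $p-1$). The paper instead uses a companion-type matrix whose inverse has $(1,1)$ entry $(-1)^n\lambda$ with $\lambda$ ranging over all of $\F_p$, followed by conjugation by $x$ to permute the diagonal entries; this dependence is linear in $\lambda$ and therefore valid for every $p$, including $p=2$ — which matters, since the proposition is stated for all primes when $n\ge3$, whereas all of your torus and "polynomial vanishing on $\F_p$" steps silently assume $p\gg_n 1$. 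For the asymptotic application this restriction is harmless, as you note, but as a proof of the stated proposition it is a further weakening on top of the $\SL_n$ gap above.
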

\begin{proof}
  Since $h_2\cdot (g^{-1}x)=(h_2x^t)\cdot g^{-1}$, it suffices to prove the result when $x=1$.

    With the identity matrix and the elementary matrices $g=I+e_{i,j}\in\SL_n(\F_p)$ for $1\le i, j\le n$ distinct, we get that $(h_1)_{i,j}=(h_2)_{i,j}$.

  When $G=\GL_2$ and $p\neq 2$, the matrices $g=\left(
    \begin{smallmatrix}
      \lambda&0\\0&1
    \end{smallmatrix}
  \right)\in\GL_2(\F_p)$ with $\lambda\in\F_p^\times$ show that
  \[(\lambda\in\F_p^\times)\mapsto \lambda (h_1)_{1,1}+\lambda^{-1}(h_2)_{1,1},\]
  is constant, so that $(h_1)_{1,1}=(h_2)_{1,1}=0$ and the diagonals of $h_1$ and $h_2$ are zero by symmetry. Similarly, the matrices $g=\lambda\left(
    \begin{smallmatrix}
      0&1\\\pm 1&0
    \end{smallmatrix}
  \right)$ with $\lambda\in\F_p^\times$ show that $(h_1)_{1,2}=\mp (h_1)_{2,1}$ and $(h_2)_{1,2}=\pm (h_2)_{2,1}$, whence $h_1=h_2=0$. Thus, we may now suppose that $n\ge 3$.

  For $1\le i,j,k\le n$ distinct, the matrix $g=I-e_{i,j}-e_{j,k}\in\SL_n(\F_p)$, with inverse $g^{-1}=I+e_{i,j}+e_{j,k}+e_{i,k}$, gives
  \[\tr(h_1+h_2)=h_1\cdot g+h_2\cdot g^{-1}=\tr(h_1+h_2)+(h_2)_{i,k},\]
  so that $(h_2)_{i,k}=0$ and $h_2$ is diagonal. By symmetry, the same holds for $h_1$.

  Using the matrices
  \[g=\left(
     \begin{matrix}
       &&&&(-1)^{n+1}\\
       1&&&&\lambda\\
       &1&&&0\\
       &&\ddots&&\vdots\\
       &&&1&0
    \end{matrix}
  \right), \ g^{-1}=\left(
     \begin{matrix}
       (-1)^n\lambda&1&&&\\
       0&&1&&\\
       \vdots&&&\ddots&\\
       0&&&&1\\
       (-1)^{n+1}&&&&\\
    \end{matrix}
  \right)\]
in $\SL_n(\F_p)$, for $\lambda\in\F_p$, we get that $(\lambda\in\F_p)\mapsto (-1)^n\lambda(h_2)_{1,1}$ is constant, so that $(h_2)_{1,1}=0$. By symmetry, $(h_1)_{1,1}=0$ as well.

Finally, if $x\in\GL_n(\F_p)$, we note that
\[h_1\cdot(x^{-1}gx)+h_2\cdot(x^{-1}g^{-1}x)=(x^{-t}h_1x^t)\cdot g+(x^{-t}h_2x^t)\cdot g^{-1},\]
which shows that we may permute the diagonal elements of $h_1$ and $h_2$. By the previous steps, $h_1=h_2=0$.
\end{proof}

\begin{remark}
 By the affine linear sieve of Bourgain, Gamburd and Sarnak \cite{BGS10} and the work of Salehi-Golsefidy--Varj\'u and others, this implies the following: Let $n\ge 3$, $S$ be a finite symmetric generating set for $\SL_n(\Z)$ and $(\gamma_N)_{N\ge 0}$ be a random walk on the Cayley graph of $\SL_n(\Z)$ with respect to $S$, starting at $1$, i.e.
 \[\gamma_{N+1}=\xi_{N+1}\gamma_N\text{ for }N\ge 0,\text{ with }\xi_{N+1}\text{ uniform in }S.\]
 Then, for any $h_1,h_2\in M_{n}(\Z)$ that are not both zero, there exists $M\ge 1$ such that
 \[P \Big(h_1\cdot \gamma_N+h_2\cdot \gamma_N^{-1}\text{ has }\le M\text{ prime factors}\Big)\asymp 1/N\]
 as $N\to+\infty$.
\end{remark}

\subsubsection{Symplectic groups}

\begin{proposition}
  Let $n\ge 2$, $G=\Sp_{2n}$, $x\in G(\F_p)$, and $h_1,h_2\in M_{2n}(\F_p)$. Then
  \[\big(g\in G(\F_p)\big)\mapsto h_1\cdot g+h_2 \cdot g^{-1}x\]
  is constant if and only if
  \begin{equation}
    \label{eq:SpConstant}
    h_1=\left(
      \begin{matrix}
        h_{11}&h_{12}\\
        h_{13}&h_{14}
      \end{matrix}
    \right), \ h_2=\left(
      \begin{matrix}
        -h_{14}^t&h_{12}^t\\
        h_{13}^t&-h_{11}^t
      \end{matrix}
\right)x^{-t},
\end{equation}
where $h_{1i}\in M_{n}(\F_p)$ $(1\le i\le 4)$.
\end{proposition}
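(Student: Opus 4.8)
The plan is to rewrite $g\mapsto h_1\cdot g+h_2\cdot(g^{-1}x)$ as a single inner product $M\cdot g$, reduce the statement to the vanishing of $M$, and establish that by testing against a few well-chosen elements of $\Sp_{2n}(\F_p)$, invoking Proposition \ref{prop:constantGLSL} for $\GL_n$ along the way.

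First I would reduce to $x=1$: since $h_2\cdot(g^{-1}x)=\tr(h_2^tg^{-1}x)=\tr(xh_2^tg^{-1})=\tr\bigl((h_2x^t)^tg^{-1}\bigr)=(h_2x^t)\cdot g^{-1}$, it suffices to treat the case $x=1$ and substitute $h_2\mapsto h_2x^t$ at the end, which is exactly what produces the factor $x^{-t}$ in \eqref{eq:SpConstant}. Then, writing $g=\left(\begin{smallmatrix}g_1&g_2\\g_3&g_4\end{smallmatrix}\right)$, $h_1=\left(\begin{smallmatrix}h_{11}&h_{12}\\h_{13}&h_{14}\end{smallmatrix}\right)$, $h_2=\left(\begin{smallmatrix}h_{21}&h_{22}\\h_{23}&h_{24}\end{smallmatrix}\right)$, plugging the block formula for $g^{-1}$ recorded above into $h_1\cdot g+h_2\cdot g^{-1}$ and using $u\cdot v^t=u^t\cdot v$ termwise yields
\[h_1\cdot g+h_2\cdot g^{-1}=M\cdot g,\qquad M=\begin{pmatrix}h_{11}+h_{24}^t&h_{12}-h_{22}^t\\h_{13}-h_{23}^t&h_{14}+h_{21}^t\end{pmatrix}.\]
Now $M=0$ is precisely \eqref{eq:SpConstant} with $x=1$, so the proposition reduces to the claim that $g\mapsto M\cdot g$ is constant on $\Sp_{2n}(\F_p)$ if and only if $M=0$. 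One implication is obvious.

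For the converse, write $M=\left(\begin{smallmatrix}m_1&m_2\\m_3&m_4\end{smallmatrix}\right)$ and recall that $\Sp_{2n}$ (for the standard form) contains $d_A=\left(\begin{smallmatrix}A&0\\0&(A^t)^{-1}\end{smallmatrix}\right)$ for $A\in\GL_n(\F_p)$, as well as $u_S=\left(\begin{smallmatrix}I&S\\0&I\end{smallmatrix}\right)$ and $\ell_S=\left(\begin{smallmatrix}I&0\\S&I\end{smallmatrix}\right)$ for $S\in M_n(\F_p)$ symmetric. Evaluating $M\cdot g$ at $g=d_A$ shows that $A\mapsto m_1\cdot A+m_4^t\cdot A^{-1}$ is constant on $\GL_n(\F_p)$, hence $m_1=m_4=0$ by Proposition \ref{prop:constantGLSL} applied to $\GL_n$ (here one uses $n\ge 2$). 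In particular the constant value of $M\cdot g$ is $0$, so evaluating at $g=u_Sd_A=\left(\begin{smallmatrix}A&S(A^t)^{-1}\\0&(A^t)^{-1}\end{smallmatrix}\right)$ gives $\tr(m_2^tSB)=0$ for every symmetric $S\in M_n(\F_p)$ and every $B\in\GL_n(\F_p)$; since the $\F_p$-span of $\GL_n(\F_p)$ is all of $M_n(\F_p)$, this forces $m_2^tS=0$ for all symmetric $S$, whence $m_2=0$ (take $S=I$). Symmetrically, testing against $\ell_Sd_A$ gives $m_3=0$, and therefore $M=0$.

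I expect the main obstacle to be the transpose bookkeeping in the identity $h_1\cdot g+h_2\cdot g^{-1}=M\cdot g$, since the exact shape of \eqref{eq:SpConstant} depends on placing each $h_{2i}^t$ in the correct block; one must also verify that $d_A$, $u_S$, $\ell_S$ and the products $u_Sd_A,\ell_Sd_A$ genuinely lie in $\Sp_{2n}$ for the form underlying the block inversion formula. Everything else is either a direct appeal to Proposition \ref{prop:constantGLSL} (which is where the hypothesis $n\ge 2$, and silently $p$ odd when $n=2$, enters) or elementary linear algebra.
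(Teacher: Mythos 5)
Your proof is correct and follows the same overall strategy as the paper: reduce to $x=1$ via $h_2\cdot(g^{-1}x)=(h_2x^t)\cdot g^{-1}$, use the block formula for $g^{-1}$ to collapse the map to $g\mapsto M\cdot g$, and then kill the diagonal blocks of $M$ by restricting to $\left(\begin{smallmatrix}A&0\\0&A^{-t}\end{smallmatrix}\right)$ and invoking Proposition \ref{prop:constantGLSL} for $\GL_n$. The only divergence is in the off-diagonal blocks: the paper disposes of $m_2,m_3$ by a second appeal to Proposition \ref{prop:constantGLSL} along the antidiagonal family (which should read $\left(\begin{smallmatrix}0&A\\-A^{-t}&0\end{smallmatrix}\right)$ rather than $\left(\begin{smallmatrix}0&A\\-A^{t}&0\end{smallmatrix}\right)$, the latter being symplectic only when $A^2=I$ — your route conveniently sidesteps this), whereas you use the unipotent elements $u_Sd_A$, $\ell_Sd_A$ and the fact that $\GL_n(\F_p)$ spans $M_n(\F_p)$. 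Both are equally valid; your explicit computation of $M$ also makes the exact shape of \eqref{eq:SpConstant} transparent, which the paper leaves implicit.
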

\begin{proof}
  Again, it suffices to consider the case $x=1$. With respect to the standard form,
  \[g=\left(
      \begin{matrix}
        A&0\\0&A^{-t}
      \end{matrix}
    \right), \ \left(
      \begin{matrix}
        0&A\\-A^t&0
      \end{matrix}
    \right)\in\Sp_{2n}(\F_p)\]
  for any $A\in\GL_n(\F_p)$. The result then follows from applying Proposition \ref{prop:constantGLSL}.
\end{proof}

\subsubsection{Special orthogonal groups}

\begin{proposition}
  For $n\ge 3$, let $G=\SO_{n, I_n}$. Then, for $p\ge 3$ and $h\in M_{n}(\F_p)$,
  \[\big(g\in G(\F_p)\big)\mapsto h\cdot g\]
  is constant if and only if $h=0$.
\end{proposition}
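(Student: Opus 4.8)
The plan is as follows. The ``if'' direction is trivial, since $g\mapsto 0\cdot g$ is identically $0$. For the converse, suppose $g\mapsto h\cdot g$ is constant on $\SO_{n,I_n}(\F_p)$; evaluating at $g=I_n$ shows the constant equals $\tr(h)$, so that $h\cdot(g-I_n)=0$ for every $g\in\SO_{n,I_n}(\F_p)$. Since the inner product $\cdot$ on $M_n(\F_p)$ is non-degenerate, it is equivalent to prove that the $\F_p$-span of $\{g-I_n:g\in\SO_{n,I_n}(\F_p)\}$ is all of $M_n(\F_p)$, and I would do this by testing $h$ against a short list of explicit elements. (An argument via the symmetric square of the standard representation is tempting, but awkward here since $p$ is only assumed $\ge 3$ and may divide $n$, so that this module need not be semisimple; keeping everything explicit sidesteps all characteristic issues.)

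First I would use plane rotations. For distinct indices $i,j$ and any pair $(a,b)\in\F_p^2$ with $a^2+b^2=1$, the matrix acting as $\left(\begin{smallmatrix}a&-b\\b&a\end{smallmatrix}\right)$ on the coordinates $i,j$ and as the identity elsewhere lies in $\SO_{n,I_n}(\F_p)$, and pairing it with $h$ gives
\[h\cdot g=\tr(h)+(a-1)(h_{ii}+h_{jj})+b(h_{ji}-h_{ij}).\]
Plugging in $(a,b)=(-1,0)$ and using $2\neq 0$ forces $h_{ii}+h_{jj}=0$; and since the conic $a^2+b^2=1$ has at least $4$ points over $\F_p$ (for $p\ge 3$) while only $(\pm1,0)$ satisfy $b=0$, there is a point with $b\neq 0$, which then forces $h_{ij}=h_{ji}$. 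Hence $h$ is symmetric, and summing the relations $h_{ii}+h_{jj}=0$ over three pairwise distinct indices (this is the one place $n\ge 3$ enters) shows that the diagonal of $h$ vanishes.

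The main obstacle is the off-diagonal part, which the plane rotations cannot detect: the symmetrization $\frac12(g+g^t)$ of such a $g$ has no off-diagonal entries, so one needs genuinely non-commuting elements. To conclude, I would fix distinct $i,j$, pick a third index $k$ (available since $n\ge 3$), and work inside the copy of $\SO_3(\F_p)$ acting on $\{i,j,k\}$. As $h$ is symmetric with zero diagonal, pairing $h$ with an element $g$ of this $\SO_3$ returns $h_{ij}(g_{ij}+g_{ji})+h_{ik}(g_{ik}+g_{ki})+h_{jk}(g_{jk}+g_{kj})$, which must equal $\tr(h)=0$. Evaluating this for the cyclic permutation matrix of $(i\,j\,k)$ and for two of its signed variants (compose the $3$-cycle with a sign change on two of the three coordinates, so the determinant stays $1$) produces a linear system in $(h_{ij},h_{ik},h_{jk})$ whose coefficient matrix is $\left(\begin{smallmatrix}1&1&1\\1&-1&-1\\-1&-1&1\end{smallmatrix}\right)$, of determinant $-4$; this is invertible for $p\ge 3$, so $h_{ij}=h_{ik}=h_{jk}=0$. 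As $i\neq j$ were arbitrary and the diagonal is already zero, $h=0$. The only genuine computation is to check that some such triple of $\SO_3$-elements yields an invertible system — equivalently, that their symmetric parts span the three off-diagonal symmetric directions — which the signed $3$-cycles do in every characteristic $\ge 3$.
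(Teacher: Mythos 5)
Your argument is correct and follows essentially the same route as the paper: both proofs test the linear form against explicit elements of $\SO_{n,I_n}(\F_p)$ that are (signed) permutation matrices — diagonal sign changes $I-2e_i-2e_j$ for the diagonal of $h$ and signed $3$-cycles for the off-diagonal entries — and conclude by solving the resulting linear system, which is invertible since $p\ge 3$. Your extra use of general plane rotations on the conic $a^2+b^2=1$ to first establish $h_{ij}=h_{ji}$ is a harmless variation in bookkeeping rather than a different method.
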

\begin{proof}
  Any permutation matrix $g_\sigma\in\SL_n(\F_p)$ with $\sigma\in A_n$ belongs to\footnote{If $G$ corresponded instead to the form $\diag(\alpha,1,\dots,1)$, $\alpha\neq 1$, this would be true only for the permutations fixing $1$.} $G(\F_p)$. If $\sigma=(i\,j\,k)\in A_n$ is a cycle of length $3$, the matrices $g_{\sigma}(I-2e_j-2e_k)$ and $g_\sigma$ show that $h$ is diagonal. For $1\le i,j\le n$ distinct, the matrices $I-2e_i-2e_j$ show that the diagonal of $h$ is zero.
\end{proof}

\subsection{Proof of Theorems \ref{thm:unifdisppGLSL} and \ref{thm:GLdet}}
By Proposition \ref{prop:discrepancy}, for any integer $1\le T<p$, the discrepancy $D_N(A_x(g))$ is
\begin{eqnarray*}
  &\ll&\frac{1}{T}+\sum_{\substack{\bs h\in M_n(\Z)^2\\0<||h||_\infty\le T}}\frac{1}{r(\bs h)}\left|\frac{1}{|H|}\sum_{g\in H}\psi\left(h_1\cdot g+h_2\cdot f(g)\right)\right|.
\end{eqnarray*}
By \eqref{eq:sums3}, the second summand is
\begin{eqnarray*}
  &&\ll (\log{T})^{n^2}\max_{\substack{h_2\in M_n(\Z)\\||h_2||_\infty\le T}}\sum_{\substack{h_1\in M_n(\Z)\\||(h_1,h_2)||_\infty\le T\\ (h_1,h_2)\neq 0}}\frac{1}{r(h_1)}\\
  &&\hspace{3cm}\sum_{\substack{\chi\in\widehat{\F_p^\times}\\\chi|_U=1}}\left|\frac{1}{|G(\F_p)|}\sum_{g\in G(\F_p)} \psi\left(h_1\cdot g+h_2 \cdot f(g)\right) \chi(f_1(g))\right|.
\end{eqnarray*}
By Proposition \ref{prop:stratification} and Proposition \ref{prop:constantGLSL}, we get
\begin{eqnarray*}
  D_N(A_x(g))&\ll&\frac{1}{T}+\frac{|G(\F_p)|}{|H|}\frac{(\log{T})^{2n^2-\dim G+1}}{\sqrt{p}}\\
             &\ll&\frac{|G(\F_p)/H|}{\sqrt{p}}\log\left(\frac{\sqrt{p}}{|G(\F_p)/H|}\right)^{2n^2-\dim G+1},
\end{eqnarray*}
taking $T=\floor{\sqrt{p}/|G(\F_p)/H|}$.

The last statements in Theorems \ref{thm:unifdisppGLSL} and \ref{thm:GLdet} follow from Remark \ref{rem:isotropic}.

\begin{remark}
  Using Proposition \ref{prop:expSumG} only instead of Proposition \ref{prop:stratification} would have given an exponent of the logarithm equal to $2n^2$.
\end{remark}
\subsection{Proof of Theorems \ref{thm:distrElements} and \ref{thm:distrElementsf}}
Similarly, by Propositions \ref{prop:discrepancy}, \ref{prop:stratification} and \eqref{eq:sums3}, for $1\le T<p$, the discrepancy $D_N(\eta(g))$ is
\begin{eqnarray*}
  &\ll&\frac{1}{T}+\sum_{\substack{h\in\Z^{n^2}\\0<||h||_\infty\le T}} \frac{1}{r(h)}\left|\sum_{g\in H}\frac{1}{|H|}\psi(h\cdot g)\right|\\
              &\ll&\frac{1}{T}+\sum_{\chi\in\widehat{G(\F_p)/H}}\sum_{\substack{h\in M_n(\Z)\\0<||h||_\infty\le T}}\frac{1}{r(h)}\left|\frac{1}{|G(\F_p)|}\sum_{g\in G(\F_p)} \psi\left(h\cdot g\right) \chi(g)\right|\\
              &\ll&\frac{1}{T}+\frac{|G(\F_p)|}{|H|}\frac{(\log{T})^{n^2-\dim G+1}}{p^{1/2}}\ll\frac{|G(\F_p)/H|}{\sqrt{p}}\log\left(\frac{\sqrt{p}}{|G(\F_p)/H|}\right)^{n^2-\dim G+1}
\end{eqnarray*}

\begin{remark}
  As above, using Proposition \ref{prop:expSumG} instead of Proposition \ref{prop:stratification} would have given an exponent of the logarithm equal to $n^2$. Note that these exponents in the case of $\GL_n$ or $\SL_n$ do not depend on $n$.
\end{remark}

\subsection{Higher-dimensional variant} To obtain Theorem \ref{thm:unifdisppGLSLr}, we need to control sums of the form
\begin{equation}
  \label{eq:sumr}
  \sum_{\bs g\in G^{r-1}(\F_p)}\psi \left(\sum_{i=1}^{r-1}h_i\cdot g_i+h_{r}(g_1\dots g_{r-1})^{-1}x\right)
\end{equation}
for $\bs h=(h_1,\dots,h_r)\in M_n(\F_p)^{r}$. To do so, it suffices to replace $G$ by $G^{r-1}$ and $M_n(\F_p)^2$ by $M_n(\F_p)^{r}$ in the arguments above. In the first bound, there is no dependency with $r$ in the exponent since we average over all but one $h_i$, and we can use Proposition \ref{prop:stratification}. From Proposition \ref{prop:constantGLSL}, we see that the rational function in \eqref{eq:sumr} is constant if and only if $\bs h=0$.

\section{Proof of Theorem \ref{thm:intervals} and Corollary \ref{cor:intervals}}

\subsection{Proof of Theorem \ref{thm:intervals}}

By orthogonality, we can write the density \eqref{eq:densityIntervals} as 
\begin{eqnarray}
  &&\frac{1}{|G(\F_p)|}\sum_{g\in G(\F_p)} \sum_{\substack{e\in E_p\\f\in F_p}} \frac{1}{p^{2n^2}}\sum_{u,v\in M_n(\F_p)}\psi \left(u\cdot (g-e)+v\cdot(g^{-1}x-f)\right)\nonumber\\
  &=&\frac{1}{p^{2n^2}}\sum_{u,v\in M_n(\F_p)}\sum_{e\in E_p}\overline\psi(u\cdot e)\sum_{f\in F_p}\overline\psi(v\cdot f)S(u,v)\label{eq:densityError}\\
  &=&\frac{|E_p||F_p|}{p^{2n^2}}+ O \left(\frac{1}{p^{2n^2}}\sum_{\substack{u,v\in M_n(\F_p)\\(u,v)\neq 0}}\left|\sum_{e\in E_p}\overline\psi(u\cdot e)\right|\left|\sum_{f\in F_p}\overline\psi(v\cdot f)\right||S(u,v)|\right),\nonumber
\end{eqnarray}
where $E_p=E\pmod{p}$, $F_p=F\pmod{p}\subset\F_p$ and 
\[S(u,v)=\frac{1}{|G(\F_p)|}\sum_{g\in G(\F_p)}\psi \left(u\cdot g+v\cdot(g^{-1}x)\right).\]

Since $E=\prod_{1\le k,l\le n}E_{kl}$ is a product of intervals, Weyl's bound gives
\begin{eqnarray*}
  \sum_{e\in E_p}\overline\psi(u\cdot e)&=&\prod_{1\le k,l\le n}\sum_{e_{kl}\in E_{kl}}\overline\psi(u_{kl} e_{kl})\\
                                      &\ll&\prod_{1\le k,l\le n}\min\left(|E_{kl}|,||u_{kl}/p||^{-1}\right),
\end{eqnarray*}
and similarly for $F$, with $||\cdot||$ denoting the distance to the nearest integer and $|E_{kl}|:=\meas(E_{kl})$. Hence, the error term in \eqref{eq:densityError} is
\begin{eqnarray*}
  &\ll&\frac{1}{p^{n^2}}\sum_{v\in M_n(\F_p)}\prod_{1\le k,l\le n}\min\left(|F_{kl}|,||v_{kl}/p||^{-1}\right)\\
  &&\times\frac{1}{p^{n^2}}\sum_{\substack{u\in M_n(\F_p)\\(u,v)\neq 0}}|S(u,v)|\prod_{1\le k,l\le n}\min\left(|E_{kl}|,||u_{kl}/p||^{-1}\right).
\end{eqnarray*}
To bound the sum over $u$, we proceed as in Proposition \ref{prop:stratification}, using \cite{FK01}. With $d=\dim G$,
\begin{eqnarray}
  &&\frac{1}{p^{n^2}}\sum_{\substack{u\in M_n(\F_p)\\(u,v)\neq 0}}|S(u,v)|\prod_{1\le k,l\le n}\min\left(|E_{kl}|,||u_{kl}/p||^{-1}\right)\label{eq:sumu}\\
  &\ll&\left(\frac{1}{p^{d/2}}\sum_{j=0}^{d-1} p^{j/2}+\sum_{j=d}^{n^2} p^{-1/2} \right)\frac{1}{p^{n^2}}\sum_{u\in X_j(\F_p)}\prod_{1\le k,l\le n}\min\left(|E_{kl}|,||u_{kl}/p||^{-1}\right).\nonumber
\end{eqnarray}
By \cite[Lemma 9.5]{FK01} (or \cite[(2.6)]{Fouv00}), if $X\subset \A^{n^2}$ has dimension $\le n^2-j$,
\begin{equation*}
  \label{eq:FK95}
  \sum_{u\in X(\F_p)}\prod_{1\le k,l\le n}\min\left(|E_{kl}|,||u_{kl}/p||^{-1}\right)\ll (p\log{p})^{n^2-j}M_E^{j},
\end{equation*}
where $M_E=\max_{k,l}|E_{kl}|$. Proceeding by induction as in op. cit., we get the more precise bound
\begin{equation}
  \label{eq:FK952}
  \sum_{u\in X(\F_p)}\prod_{1\le k,l\le n}\min\left(|E_{kl}|,||u_{kl}/p||^{-1}\right)\ll (p\log{p})^{n^2-j} e_{j}(|E_{kl}|)
\end{equation}
when the $|E_{kl}|$ may not be all equal, where $e_{j}$ is the $j$th elementary symmetric polynomial in $n^2$ variables.

Thus, \eqref{eq:sumu} is
\begin{eqnarray*}
  &\ll&\left(\frac{1}{p^{d/2}}\sum_{j=0}^{d-1} p^{j/2}+\sum_{j=d}^{n^2} p^{-1/2} \right) (\log{p})^{n^2}e_j(|E_{kl}|) p^{-j}\\
  &\ll&(\log{p})^{n^2}\left(\frac{1}{p^{d/2}}\sum_{j=0}^{d-1} e_j\left(\frac{|E_{kl}|}{\sqrt{p}}\right) +\frac{1}{\sqrt{p}}\sum_{j=d}^{n^2} e_j\left(\frac{|E_{kl}|}{p}\right) \right)\\
  &\ll&(\log{p})^{n^2}\left(\frac{1}{p^{d/2}}\sum_{j=0}^{d-1} e_j\left(\frac{|E_{kl}|}{\sqrt{p}}\right) +\frac{1}{\sqrt{p}}e_d\left(\frac{|E_{kl}|}{p}\right) \right).
\end{eqnarray*}

By Maclaurin's inequality \cite[(12.3)]{Stee04}, letting $L_E=e_1(|E_{kl}|)$, this is
\begin{eqnarray*}
  &\ll&(\log{p})^{n^2}\left(\frac{1}{p^{d/2}}\sum_{j=0}^{d-1} \left(L_E/\sqrt{p}\right)^{j}+\frac{(L_E/p)^d}{\sqrt{p}} \right)\\
  &\ll&(\log{p})^{n^2}\left(\frac{1}{p^{d/2}}\max \left(1, \left(L_E/\sqrt{p}\right)^{d-1}\right)+\frac{(L_E/p)^d}{\sqrt{p}} \right)\\
  &\ll&\frac{(\log{p})^{n^2}}{p^{d/2}}\max \left(1, \left(L_E/\sqrt{p}\right)^{d-1}\right).
\end{eqnarray*}

Using \eqref{eq:FK952} again, we find that the total error in \eqref{eq:densityError} is
\begin{eqnarray*}
  &\ll&\frac{(\log{p})^{2n^2}}{p^{d/2}}\max \left(1, \left(L_E/\sqrt{p}\right)^{d-1}\right)
\end{eqnarray*}
\subsubsection{Proof of Corollary \ref{cor:intervals}}

Finally, if $E$ and $F$ are the products of intervals of the same integral length $x$, then the density \eqref{eq:densityIntervals} is
\[\left(\frac{x}{p}\right)^{2n^2}+ O_n \left(\frac{(\log{p})^{2n^2}}{p^{d/2}}\max \left(1, \left(\frac{x}{\sqrt{p}}\right)^{d-1}\right)\right).\]
The main term dominates if and only if
\[x\gg_{n,\varepsilon} p^{1-\frac{1}{2(2n^2-\dim G+1)}+\varepsilon}\]
for any $\varepsilon>0$, which yields the corollary.

\bibliographystyle{alpha}
\bibliography{references}

\end{document}